\documentclass{amsart}

\usepackage[english]{babel}

\usepackage[letterpaper,top=2cm,bottom=2cm,left=3cm,right=3cm,marginparwidth=1.75cm]{geometry}

\usepackage{amsmath}
\usepackage{amssymb}
\usepackage{graphicx}
\usepackage[pdftex]{pict2e}
\usepackage{enumitem}
\usepackage{tikz}

\newtheorem{theorem}{Theorem}[section] 
\newtheorem{lemma}[theorem]{Lemma}     

\newtheorem{proposition}[theorem]{Proposition}

\theoremstyle{definition}
\newtheorem{example}[theorem]{Example}
\newtheorem{remark}[theorem]{Remark}

\newcommand\lu{\mathbb{L}}

\newcommand\alge{\mathbb{M}}
\newcommand\tr{\textnormal{ tr}}

\newcommand\oink{\mathcal O}

\newcommand\Ba{\mathbf B}
\newcommand\Ra{\mathbf R}
\newcommand\Da{\mathbf{D}}
\newcommand\Ha{\mathbf{H}}
\newcommand\Ea{\mathbf{E}}

\newcommand\cs{{}_{\clubsuit}}

\newcommand\bmattrix[4]{\left(\begin{array}{cc}#1&#2\\#3&#4\end{array}\right)}
\newcommand\sbmattrix[4]{\textnormal{\scriptsize$\left(\begin{array}{cc}#1&#2\\#3&#4\end{array}\right)$\normalsize}}

\title[Two characterizations of Bass orders]
{Two characterizations of Bass orders via branches}

\author{Luis Arenas-Carmona}

\begin{document}

\footnote{\emph{Keywords:}
Bass orders, maximal orders, Bruhat-Tits trees.\\
\emph{MSC (2020):} 11S45(Primary), 16H10, 16G30 (Secondary).}

\begin{abstract}
It has been known for some time that the orders in the four
dimensional matrix algebra over a local field
that can be written as a finite
intersection of maximal orders are precisely those whose
Gorenstein closure is Eichler. In this paper, a similar
characterization is given for orders whose Gorenstein closure
is a Bass order. A second characterization, this time for
the Bass orders themselves, is given in terms of their
branches, i.e., maximal subgraphs of the Bruhat-Tits tree
whose vertices are orders containing them.
\end{abstract}

\maketitle

\section{Introduction}\label{intro}

In all of this work, $\alge$ denotes the matrix algebra
$\mathbb{M}_2(K)$ over a local field $K$ with ring of integers
$\mathcal{O}_K$. Gorenstein orders can be defined as maximal rank (or full) orders 
$\Ra$ whose codifferent, given by
$$\mathrm{codiff}(\Ra)=\{a \in\alge| \tr(a\Ra)\subseteq\oink_K\}
\subseteq\alge,$$
is invertible as a two-sided fractional $\Ra$-ideal.
There is a list of properties that can be shown to be equivalent
to being Gorenstein. For instance, $\Ra$ is Gorenstein if
and only if $\mathrm{Hom}_{\oink_K}(\Ra,\oink_K)$ is projective
as an $\Ra$-bimodule. See \cite[Prop. 24.2.3]{voight} for
details. It is shown in \cite[Prop. 24.2.15]{voight} that for 
every full order $\Ra$ in a quaternion algebra, there exists a
unique Gorenstein order $\tilde{\Ra}=\mathrm{Gor}(\Ra)$ 
and a unique ideal $I\subseteq\oink_K$ satisfying 
\begin{equation}\label{gor}
    \Ra=\oink_K\mathbf{1}+I\tilde{\Ra},
\end{equation} 
where $\mathbf{1}=\mathbf{1}_{\alge}\in\alge$ 
is the identity matrix.
In particular, $\Ra$ is Gorenstein if and only if 
$I=\oink_K$ and $\tilde{\Ra}=\Ra$. Equivalently, a 
Gorenstein order is one for which no non-trivial expression 
of the form (\ref{gor}), for an order $\tilde{\Ra}$ and an ideal
$I$, is possible. This equivalent definition 
is frecuently used in what follows.
In current literature the order $\tilde{\Ra}$
is called the Gorenstein closure of $\Ra$,
although the reader must be warned that $\tilde{\Ra}$
is not, in general, the smallest Gorenstein order
containing $\Ra$. The term ``Gorenstein saturation''
is also used in literature.
An order $\Ba$ is Bass when every order $\Ra$ containing
$\Ba$ is Gorenstein. Both Gorenstein and Bass orders play
a central role in the classification of quaternion orders.
See \cite{Brz82}, \cite{Brz83} and \cite[\S24.2]{voight}
for more details on the general theory.

The set $\mathfrak{t}(K)$ of maximal orders in $\alge$
can be endowed with a graph structure by defining 
a neighbor relation on this set. This graph is called the
Bruhat-Tits tree. The precise definition is recalled in next 
section. In \cite{Tu}, finite intersections of maximal orders in
$\alge$ are fully characterized in terms of this graph. 
It is proven there
that any finite intersection $\Ra=\bigcap_{i=1}^n\Da_i$ 
equals an intersection of 
at most 3 maximal orders. Then either:
\begin{enumerate}
    \item $\Ra$ is an Eichler order, i.e.,
    either maximal or the intersection of two 
    maximal orders, or
    \item $\Ra$ is the intersection of three maximal orders
    corresponding to non-collinear vertices in the Bruhat-Tits
    tree.
\end{enumerate}
 In the latter case, $\Ra$ can be proved to be a 
 non-Gorenstein order whose
 Gorenstein closure is Eichler. 
 See Example \ref{e24} below
 for details. The description of these orders as intersections
 play a significant role when computing spinor images, 
 a mayor step in solving the embedding problem, i.e.,
 determining the set of orders in a given genus
 containing suborders in a particular isomorphism class.
 Since an Eichler order $\Ea$ is completely determined by the
set $\mathfrak{s}_K(\Ea)$ of maximal orders containing it, 
we can study the set of Eichler orders containing a fixed
suborder $\Ha$ by computing the set of maximal orders
containing $\Ha$ and applying the graph structure. 
 Not having an equivalent tool for other Bass orders
 makes computing spinor images for them harder.
Our purpose in this paper is finding a tool that fills
this gap. We believe that the theory of ghost intersections
described below is that tool.
 
 We define a ghost intersection of maximal orders as an 
order of the form $\Ra=\alge
\cap\bigcap_{i=1}^n\Da_i$ where
$\Da_1,\dots,\Da_n$ are $\oink_L$-maximal orders
in $\alge_L=\mathbb{M}_2(L)$ for some finite extension of fields $L/K$.
The main result of this work is a characterization of
Bass orders along the same lines as in the reference:
\begin{theorem}\label{t12}
An full order $\Ra$ is a ghost intersection 
of maximal orders if and
only if its Gorenstein closure is a Bass order.
\end{theorem}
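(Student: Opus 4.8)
The plan is to treat the two implications of Theorem~\ref{t12} separately, in each case reducing to the Gorenstein situation by means of a single ``descent identity'' and then invoking the classification of Bass orders in $\alge$ (Brzezinski \cite{Brz83}) together with Tu's structure theorem \cite{Tu} applied over an auxiliary field $L$. The computational core of the reduction is the identity
$$\alge\cap\bigl(\oink_L\mathbf{1}+J\mathcal{E}\bigr)=\oink_K\mathbf{1}+(J\cap\oink_K)\bigl(\alge\cap\mathcal{E}\bigr),$$
valid for every bounded $\oink_L$-order $\mathcal{E}\subseteq\mathbb{M}_2(L)$ and every nonzero ideal $J\subseteq\oink_L$; it is proved by a direct computation with matrix entries, the point being that an element of $\alge$ lying in $\oink_L\mathbf{1}+J\mathcal{E}$ automatically has integral diagonal and scalar part in $\oink_K+J$, so that this scalar part may be absorbed into $\oink_K$. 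Writing $\mathcal{E}=\oink_L\mathbf{1}+J\,\mathrm{Gor}_{\oink_L}(\mathcal{E})$ by the $\oink_L$-analogue of~(\ref{gor}) and feeding this into the identity yields, through the uniqueness in~(\ref{gor}), that $\mathrm{Gor}(\alge\cap\mathcal{E})=\mathrm{Gor}\bigl(\alge\cap\mathrm{Gor}_{\oink_L}(\mathcal{E})\bigr)$ for every $\mathcal{E}$ that is a finite intersection of maximal $\oink_L$-orders; since by Tu's theorem $\mathrm{Gor}_{\oink_L}(\mathcal{E})$ is then an Eichler $\oink_L$-order (as in Example~\ref{e24}, over $L$), the ``only if'' direction of Theorem~\ref{t12} reduces to treating $\Ra=\alge\cap\mathcal{E}$ with $\mathcal{E}$ Eichler. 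The same identity, applied with $J=I\oink_L$ for $I\subseteq\oink_K$, shows that $\oink_K\mathbf{1}+I\tilde\Ra$ is a ghost intersection whenever $\tilde\Ra$ is --- \emph{provided} one knows that each $\oink_L$-order of the shape $\oink_L\mathbf{1}+\mathfrak{p}_L^{m}\mathbb{M}_2(\oink_L)$ is a ghost intersection over $L$; granting this ``push-down'', the ``if'' direction reduces to realizing each Gorenstein Bass order as a ghost intersection.

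For \emph{``Gorenstein closure Bass $\Rightarrow$ ghost intersection''}: by the push-down it is enough to realize a Gorenstein Bass order $\Ba$. By Brzezinski's classification $\Ba$ is, up to conjugacy, either an Eichler order --- already a finite intersection of at most two maximal $\oink_K$-orders, hence a ghost intersection with $L=K$ --- or one of the explicit ``ladder'' orders lying between $\oink_F\mathbf{1}$ and a fixed maximal order, where $F/K$ is a quadratic field extension. In the unramified case one extends scalars to $F$: since $\oink_F\otimes_{\oink_K}\oink_F\cong\oink_F\times\oink_F$, the embedded torus $F\hookrightarrow\alge$ splits over $F$, and a short computation identifies $\Ba\otimes_{\oink_K}\oink_F$ with an Eichler $\oink_F$-order $\mathcal{E}$, so that $\Ba=\alge\cap\bigl(\Ba\otimes_{\oink_K}\oink_F\bigr)=\alge\cap\mathcal{E}$ exhibits $\Ba$ as a ghost intersection with $L=F$. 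In the ramified case the same idea works but more delicately (the torus order $\oink_F\otimes_{\oink_K}\oink_F$ is no longer maximal): one instead exhibits $\Ba$ directly as $\alge\cap\bigcap_i\mathrm{End}_{\oink_{L}}(\Lambda_i)$ for $L=K(\sqrt{\pi_K})$ and suitable lattices $\Lambda_i$, obtained by undoing the edge-subdivision that this extension performs on the Bruhat--Tits tree.

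For \emph{``ghost intersection $\Rightarrow$ Gorenstein closure Bass''}: after the reductions we may assume $\Ra=\{M\in\alge:\ M\Lambda_1\subseteq\Lambda_1,\ M\Lambda_2\subseteq\Lambda_2\}$ for two $\oink_L$-lattices $\Lambda_1,\Lambda_2$ in $L^2$, and we must show $\mathrm{Gor}(\Ra)$ is Bass. The argument is a case analysis on the position of the geodesic $[\Lambda_1,\Lambda_2]$ in the tree of $\mathbb{M}_2(L)$ relative to the embedded tree of $\alge$. When both homothety classes are $K$-rational, $\Ra$ is the Eichler $\oink_K$-order cut out by the corresponding $K$-lattices, which is Bass. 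Otherwise a direct computation of $\mathrm{Gor}(\Ra)$ shows it contains $\oink_F\mathbf{1}$ for the quadratic $K$-algebra $F$ attached to the direction in which the geodesic leaves the $K$-tree, and a closer look at how the two endpoints project onto the $K$-tree shows that $\mathrm{Gor}(\Ra)$ is in fact conjugate to an Eichler order or to one of the $F$-ladder orders of the previous paragraph. In every case $\mathrm{Gor}(\Ra)$ lies on Brzezinski's list of Bass orders, hence is Bass.

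The main obstacle is the push-down, i.e.\ the claim that $\oink_L\mathbf{1}+\mathfrak{p}_L^{m}\mathbb{M}_2(\oink_L)$ is a ghost intersection over $L$. For $m=1$ this order is already the intersection of the maximal $\oink_L$-orders attached to the neighbours of the base vertex, but for $m\geq2$ it is \emph{not} a finite intersection of maximal $\oink_L$-orders at all: by Tu's theorem the only such intersection with its reduced discriminant $\mathfrak{p}_L^{3m}$ would be the Eichler order of level $3m$, to which it is not conjugate (e.g.\ the latter has nontrivial idempotents). One is therefore forced onto a ramified extension $L'/L$, and the construction I have in mind uses the maximal $\oink_{L'}$-orders $\mathrm{End}_{\oink_{L'}}(\Lambda)$ attached to the lattices $\Lambda=\oink_{L'}v+\mathfrak{p}_{L'}^{j}\oink_{L'}w$, where $(v,w)$ runs over $\mathrm{GL}_2(\oink_L)$-frames of $L^2$ and $j$ over the residues modulo $e(L'/L)$ that encode ``fractional levels''; verifying that the intersection of the $L$-restrictions of these orders is exactly the ``scalar modulo $\mathfrak{p}_L^{m}$'' condition is the technical heart of the proof. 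Everything else --- the entrywise proof of the descent identity, the base change along $\oink_F$, and the finitely many cases in the geodesic analysis --- is routine, though the ramified (in particular wildly ramified) extensions require some care.
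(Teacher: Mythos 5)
Your overall architecture is reasonable and not far from the paper's (reduce the non-Gorenstein part by a descent identity of the type $\alge\cap\bigl(\oink_L\mathbf{1}+J\mathcal{E}\bigr)=\oink_K\mathbf{1}+(J\cap\oink_K)(\alge\cap\mathcal{E})$, realize the Gorenstein Bass orders over a quadratic extension, and run Tu's theorem over $L$ for the converse; compare Prop.~\ref{p46}, Prop.~\ref{p45} and Ex.~\ref{e24}). But the step you single out as the ``technical heart'' --- the push-down for $\oink_L\mathbf{1}+\mathfrak{p}_L^{m}\mathbb{M}_2(\oink_L)$ --- rests on a false premise and is, in its place, only a sketch. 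You assert that for $m\geq2$ the order $\Da^{[m]}=\oink_L\mathbf{1}+\pi_L^{m}\Da$ is \emph{not} a finite intersection of maximal $\oink_L$-orders, ``since by Tu's theorem the only such intersection with that discriminant would be an Eichler order.'' This misstates Tu's theorem: finite intersections of maximal orders in $\mathbb{M}_2(L)$ are either Eichler \emph{or} non-Gorenstein orders whose Gorenstein closure is Eichler, and $\Da^{[m]}$ is exactly of the second kind. Concretely, by \cite[Thm.~8]{Tu} (see Ex.~\ref{e24}) $\Da^{[m]}$ equals $\Da_1\cap\Da_2\cap\Da_3$ for any three maximal orders spread out at distance $m$ from $\Da$. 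So the ``obstacle'' does not exist, no further ramified extension $L'/L$ is needed, and the elaborate fractional-level construction you propose --- which you do not actually verify --- can be deleted. As written, though, the load-bearing push-down is both unproved and motivated by an incorrect reading of the classification, so the argument has a genuine gap at its center.

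Two smaller cautions. First, your descent identity is asserted ``by a direct computation with matrix entries'' for an \emph{arbitrary} $\oink_L$-order $\mathcal{E}$ and arbitrary ideal $J$; the paper's Prop.~\ref{p46} proves the case actually needed ($J=\pi_K^r\oink_L$, $\mathcal{E}$ an intersection of maximal orders) and even there has to arrange that at least one of the maximal orders is defined over $K$ before the entrywise argument closes. You should either restrict your identity to that setting or supply the computation, since the claim for a maximal order $\mathcal{E}$ not defined over $K$ is not obviously true. Second, in the converse direction your case analysis of the geodesic $[\Lambda_1,\Lambda_2]$ is plausible but is again only sketched where the ramified and dyadic configurations occur; the paper handles precisely these via Props.~\ref{p26} and~\ref{p27} (the midpoint vertex $\hat{\Da}$ and its distance to $\mathfrak{s}_L(\Ha)$), and some substitute for that analysis is needed for your ``closer look at how the endpoints project'' to be a proof.
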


In these terms, we can give a purely geometrical
characterization of Bass orders that is essential in the sequel
and we record it here as it is an important result on its 
own right:

\begin{theorem}\label{t11}
An order $\Ha$ is Bass if and only if the set 
$\mathfrak{s}_K(\Ha)$ of maximal orders
containing $\Ha$ is a line as a subgraph of
$\mathfrak{t}(K)$.
\end{theorem}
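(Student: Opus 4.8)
The plan is to reduce both implications to the geometry of $\mathfrak{t}(K)$ by combining two elementary facts with the Gorenstein decomposition (\ref{gor}). The first fact is that adjacent vertices $\Da,\Da'$ of $\mathfrak{t}(K)$, represented by lattices with $\mathfrak{p}L\subseteq L'\subseteq L$, satisfy $\mathfrak{p}\Da\subseteq\Da'$ and $\mathfrak{p}\Da'\subseteq\Da$ (a one-line computation with endomorphisms); composing along a geodesic, $\mathrm{dist}(\Da,\Da')=d$ yields $\mathfrak{p}^{d}\Da\subseteq\Da'$. The second fact is the standard description of Eichler orders: for maximal orders $\Da_{0},\Da_{r}$ at distance $r$, the order $\Da_{0}\cap\Da_{r}$ is Eichler of level $\mathfrak{p}^{r}$ and $\mathfrak{s}_{K}(\Da_{0}\cap\Da_{r})$ equals the geodesic $[\Da_{0},\Da_{r}]$; this is seen by conjugating the pair to the standard model $\bmattrix{\oink_{K}}{\oink_{K}}{\mathfrak{p}^{r}}{\oink_{K}}$ and listing its stable lattices.

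From the Eichler description I obtain that $\mathfrak{s}_{K}(\Ha)$ is \emph{convex} in $\mathfrak{t}(K)$: if $\Da,\Da'\in\mathfrak{s}_{K}(\Ha)$, then $\Ha\subseteq\Da\cap\Da'$ and every vertex on $[\Da,\Da']$ contains $\Da\cap\Da'$, hence contains $\Ha$. In particular $\mathfrak{s}_{K}(\Ha)$ is a subtree of $\mathfrak{t}(K)$, and it is finite because a reduced-discriminant count bounds the index of $\Ha$ in any maximal overorder. Thus $\mathfrak{s}_{K}(\Ha)$ fails to be a line exactly when it contains a claw, i.e., a vertex $\Da$ together with three of its neighbors. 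Next comes what I will call the \emph{fattening lemma}: if an order $\Ra$ is not Gorenstein, then $\mathfrak{s}_{K}(\Ra)$ contains a claw. Indeed, write $\Ra=\oink_{K}\mathbf{1}+\mathfrak{p}^{m}\tilde{\Ra}$ with $m\ge 1$ as in (\ref{gor}), pick $\Da\in\mathfrak{s}_{K}(\tilde{\Ra})$, and let $\Da'$ be any of the $|\oink_{K}/\mathfrak{p}|+1\ge 3$ neighbors of $\Da$. Then $\mathfrak{p}^{m}\tilde{\Ra}\subseteq\mathfrak{p}^{m}\Da\subseteq\mathfrak{p}\Da\subseteq\Da'$, so $\Ra\subseteq\Da'$; since also $\Ra\subseteq\tilde{\Ra}\subseteq\Da$, the vertex $\Da$ and all its neighbors lie in $\mathfrak{s}_{K}(\Ra)$.

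With these in hand, both directions follow by contraposition. For ($\Leftarrow$): if $\Ha$ is not Bass, there is an order $\Ra\supseteq\Ha$ that is not Gorenstein; then $\mathfrak{s}_{K}(\Ra)\subseteq\mathfrak{s}_{K}(\Ha)$, and $\mathfrak{s}_{K}(\Ra)$ contains a claw by the fattening lemma, so $\mathfrak{s}_{K}(\Ha)$ is not a line. For ($\Rightarrow$): if $\mathfrak{s}_{K}(\Ha)$ is not a line, convexity gives a claw $\{\Da_{0};\Da_{1},\Da_{2},\Da_{3}\}$ inside $\mathfrak{s}_{K}(\Ha)$; the vertices $\Da_{1},\Da_{2},\Da_{3}$ are pairwise at distance $2$ and non-collinear in $\mathfrak{t}(K)$, so by the result recalled in the introduction (see \cite{Tu} and Example \ref{e24}) the order $\Ra=\Da_{1}\cap\Da_{2}\cap\Da_{3}$ is not Gorenstein; since $\Ha\subseteq\Ra$, the order $\Ha$ is not Bass.

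I expect the only genuinely delicate points to be organizational rather than deep: one must first establish convexity (hence connectedness) of $\mathfrak{s}_{K}(\Ha)$ from the Eichler description before the phrase ``not a line implies a claw'' is legitimate, and one must keep track that a line is allowed to degenerate to a single edge or a single vertex (so that maximal orders and Eichler orders are not accidentally excluded). A minor point to verify with care is the orientation in $\mathfrak{p}\Da\subseteq\Da'$ for neighbors --- one needs the lattice representatives chosen so that $\mathfrak{p}L\subseteq L'\subseteq L$, which is exactly what the homothety ambiguity permits --- since the fattening lemma relies on this inclusion and not on its reverse.
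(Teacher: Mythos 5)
Your argument is correct, and both directions run on the same two engines as the paper's own proof: the fattening of branches under $\Ra\mapsto\oink_K\mathbf{1}+\pi_K^m\Ra$ (your ``fattening lemma'' is precisely the easy inclusion of Proposition \ref{p21}) and the fact from \cite{Tu}, recorded in Example \ref{e24}, that three spread-out maximal orders intersect in the non-Gorenstein order $\Da_0^{[1]}$. The genuine difference is in how you pass from ``not a line'' to a non-Gorenstein overorder: the paper invokes Proposition \ref{p22}, i.e.\ the structure theorem that the branch of a full order is a tubular neighborhood $\mathfrak{p}^{[r]}$ of a line, reads off $r\geq1$, and takes $\Ha_{\mathfrak{s}}=\Ea^{[r]}$ as the offending overorder; you use only convexity and finiteness of the branch to extract a vertex of degree at least three together with three of its neighbors, and intersect those. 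Your route is more self-contained --- it replaces a classification result by an elementary graph-theoretic observation plus one explicit intersection computation --- at the cost of having to justify convexity and finiteness separately (both are also established in Section 2 of the paper, so nothing is lost). One point worth making explicit in a final write-up, which the paper also leaves tacit: $\Ha$ must be assumed full, since a non-full order is never Gorenstein (hence never Bass) while its branch can perfectly well be a line.
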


The importance of Theorem \ref{t12} is that we can use graphs
over field extensions of $K$ to study Bass orders, pretty much
in the same fashion as we have done for Eichler orders in
previous works. For instance, the
set of  Bass orders containing
a particular matrix $\mathbf{m}$ can be studied by computing 
the branch of the order $\oink_L[\mathbf{m}]$ in the
Bruhat-Tits tree $\mathfrak{t}(L)$, for suitable extensions
$L/K$. In fact, the proofs in this work show that we can
restrict ourselves to working on quadratic extensions.

\begin{remark}
    Note that quaternion algebras and 
    their orders can also 
    be studied through the theory of 
    ternary quadratic forms. 
    Specifically, orders can be 
    associated to quadratic forms
    which are essentially their 
    intersection with the space of 
    pure quaternions. For instance,
    in the non-dyadic case, Bass orders
    are associated to lattices of the form
    $\langle1\rangle\perp\langle b\rangle
    \perp\langle c\rangle$ with $b$ of valuation
    $0$ or $1$, see \cite[Prop. 5.8]{Lem}. 
    This relation has been used before to
    study embeddings into Eichler orders,
    see \cite{Chan}. We believe that
    the geometric method presented here have some
    advantages for ease of computation. 
    For instance, the dyadic case can be treated
    more uniformly, while the formulas
    in \cite[\S5]{Lem} often consider
    only dyadic local fields that have
    $2$ as a uniformizer.
    Examples  \ref{e64}-\ref{e69}
    in the last section illustrate 
    how our method work in explicit 
    computations.
\end{remark}

\section{Trees and branches}

In this work, by a graph $\mathfrak{g}$ we mean a
set $V=V_{\mathfrak{g}}$ whose elements are called vertices,
together with a symmetric relation 
$A_{\mathfrak{g}}=A\subseteq V\times V$,
whose elements are called edges. We assume $(v,v)$ is never 
an edge. If $e=(v,w)\in A$, then $\bar{e}=(w,v)$ is called
the reverse edge. The valency $\mathrm{val}(v)$ of 
a vertex $v$ is the number of edges $(v,w)\in A$. 
The realization $\mathrm{real}(\mathfrak{g})$ is 
the topological space obtained by identifying  the endpoints $(0,a)$ and $(1,a)$ of a marked interval
$\tilde{a}=[0,1]\times\{a\}$ to the vertices $v$ and $w$
of every pair of the form $a=\{e,\bar{e}\}$.
A subgraph $\mathfrak{h}$ of a graph $\mathfrak{g}$ is a 
subset  $V_{\mathfrak{h}}\subseteq V_{\mathfrak{g}}$ provided
with the induced relation $A_{\mathfrak{h}}=
A_{\mathfrak{g}}\cap(V_{\mathfrak{h}}\times V_{\mathfrak{h}})$.
Then we can consider $\mathrm{real}(\mathfrak{h})$ as a 
subspace of $\mathrm{real}(\mathfrak{g})$. 
An important example of graph is the real line 
graph $\mathfrak{r}$, whose realization is homeomorphic to 
$\mathbb{R}$, and whose vertex set corresponds to
$\mathbb{Z}$. We write $v_n$ for the vertex corresponding 
to $n\in\mathbb{Z}$. An integral interval is a 
connected subgraph of $\mathfrak{r}$. 
Vertices of valency one in an integral interval are 
called  endpoints.
We write $\mathfrak{i}_{n,m}$ for the integral interval with
endpoints $v_n$ and $v_m$. Infinite intervals like
$\mathfrak{i}_{0,\infty}$ or $\mathfrak{i}_{-\infty,\infty}
=\mathfrak{r}$ are defined analogously. 

A tree is a graph whose realization
is simply connected. A line in a tree is a subgraph isomorphic 
to an integral interval. Depending on the interval, we speak 
of finite lines, rays or maximal lines, the latter being
isomorphic to $\mathfrak{r}$. We also use the term endpoint
for a vertex of valency one in a line.
For every pair of vertices $\{v,w\}$ in a tree,
there is a unique finite line with $v$ and $w$ as
endpoints. Two rays in a tree are said to have the 
same visual limit
if their intersection is a ray. Visual limits are
equivalent classes under this relation. In a tree,
there is a unique ray joining a vertex to a visual limit,
meaning a unique ray in a given equivalence class
with a given initial vertex. likewise,
there is a unique maximal line joining two visual limits. See Fig. 1 for examples.
A vertex $v$ in a tree is a leaf if $\mathrm{val}(v)=1$,
a bridge if $\mathrm{val}(v)=2$, and a node if
$\mathrm{val}(v)\geq3$.

Most graphs we use are contained in the Bruhat-Tits tree
$\mathfrak{t}(K)$ for a local field $K$. The vertices
of $\mathfrak{t}(K)$ are the maximal orders $\Da$
in the matrix algebra $\alge$.
Two such orders $\Da$ and $\Da'$ are neighbors if, in some
basis, they have the form
\begin{equation}\label{nba}
\Da=\bmattrix{\oink_K}{\oink_K}{\oink_K}{\oink_K},\qquad\Da'=
\bmattrix{\oink_K}{\pi_K^{-1}\oink_K}{\pi_K\oink_K}{\oink_K}.
\end{equation}
Note that this is indeed a symmetric relation.
It is well known that this graph turns out to be
a homogeneous tree with vertices of valency
$q+1=|\oink_K/\pi_K\oink_K|+1$, and
whose visual limits are in correspondence with
the $K$-points of the projective line,
see \cite[\S II.1]{trees}.

For any order $\Ha$, not necessarily of maximal rank,
the branch $\mathfrak{s}_K(\Ha)$ is defined as the 
subgraph whose vertices are precisely the maximal orders
containing $\Ha$. The branch 
$\mathfrak{s}_K(\mathbf{u})$ for a 
matrix $\mathbf{u}\in\alge$ is defined 
analogously. Note that 
$\mathfrak{s}_K(\mathbf{u})$ is non-empty
precisely when the minimal polynomial of
$\mathbf{u}$ has integral coefficients.
In this case we say that $\mathbf{u}$
is integral. Note  that
\begin{equation}\label{gens}
\mathfrak{s}_K(\Ha)=\bigcap_{i=1}^n\mathfrak{s}_K(\mathbf{u}_i),
\end{equation}
for any set $\{\mathbf{u}_1,\dots,\mathbf{u}_n\}$ 
that generates $\Ha$
as a $\oink_K$-order. The branches $\mathfrak{s}_K(\Ha)$ 
and $\mathfrak{s}_K(\mathbf{u})$ are known to be connected,
see for example \cite[Prop. 2.3]{Eichler2}.

To compute the branch $\mathfrak{s}_K(\Ha)$ we use
relation (\ref{gens}) above. Furthermore,
it can be assumed that none of the generators $\mathbf{u}_i$
is central in $\alge$, since central integers are contained in 
every maximal order. To compute the 
branch $\mathfrak{s}_K(\mathbf{u})$, for a non central 
integral element $\mathbf{u}$, we note that the 
order $\oink_K[\mathbf{u}]$, generated by $\mathbf{u}$, 
has rank two. 
It spans the algebra $\lu=K[\mathbf{u}]$, which is 
isomorphic to one of the following: 
$K[x]/(x^2)$, $K\times K$, or a quadratic extension
$L$ of $K$. Case by case, computations are as 
follows:
\begin{enumerate}
[label=($\diamondsuit$\bf{\arabic*})]
    \item\label{dia1} If $\lu
    \cong K[x]/(x^2)$, then
    $\mathfrak{s}_K(\mathbf{u})$ is what 
    we call an infinite
    foliage. It can be defined as the union of a 
    sequence of balls $\mathfrak{b}_i$, of radius 
    $i$ and center $v_i$, for
    $i=1,2,\dots$, where $v_0,v_1,v_2,\dots$
    is the ordered sequence of vertices in a ray,
    as depicted in Fig. \ref{f1}.\textbf{I}.
     See \cite[Prop. 4.4]{Eichler2}.
    \item\label{dia2} If 
    $\lu\cong K\times K$, then
    its integral elements are all in a ring
    $\oink_\lu\cong \oink_K\times \oink_K$,
    and $\oink_K[\mathbf{u}]=\oink_K\mathbf{1}+
    \pi_K^r\oink_\lu$,
    for some integer $r\geq0$ (c.f. 
    \cite[Lem. 4.1]{Eichler2}). Then 
    $\mathfrak{s}_K(\mathbf{u})$ is
    the set of orders at distance $r$ or less
    from a maximal line, called the stem, 
    as depicted in Fig. \ref{f1}.\textbf{II}.
     See \cite[Prop. 2.4]{Eichler2}
    and \cite[Prop. 4.2]{Eichler2}.
    \item \label{dia3} If $\lu\cong L$ 
    is a quadratic extension, then again 
    we have
    $\oink_K[\mathbf{u}]=\oink_K\mathbf{1}+
    \pi_K^r\oink_\lu$
    for some integer $r\geq0$ 
    (c.f. \cite[Lem. 4.1]{Eichler2}). Then 
    $\mathfrak{s}_K(\mathbf{u})$ is
    the set of orders at distance $r$ or less
    from an edge (Fig. \ref{f1}.\textbf{III}), 
    if $L/K$ is ramified,
    or a unique vertex otherwise 
    (Fig. \ref{f1}.\textbf{IV}).
     See \cite[Prop. 2.4]{Eichler2}
    and \cite[Prop. 4.2]{Eichler2}.
\end{enumerate}
\begin{figure}
    \centering
    \setlength{\unitlength}{2mm}
    \begin{picture}(100,15)(0,-3)
        \put(2.5,-0.1){$\dots$}
        \put(1.7,-0.3){$\star$}
        \put(15,8){\textbf{I}}
        \put(5,0){\line(1,0){15}}
        \put(8,0){\line(0,1){10}}
        \put(8,5){\line(1,0){5}}
        \put(8,8){\line(1,0){2}}
        \put(11,5){\line(0,1){2}}
        \put(15,0){\line(0,1){5}}
        \put(15,3){\line(1,0){2}}
        \put(18,0){\line(0,1){2}}
        \put(7.5,-0.5){$\bullet$}
        \put(7.5,-2){$v_3$}
        \put(14.5,-0.5){$\bullet$}
        \put(14.5,-2){$v_2$}
        \put(17.5,-0.5){$\bullet$}
        \put(17.5,-2){$v_1$}
        \put(19.5,-0.5){$\bullet$}
        \put(19.5,-2){$v_0$}
        \put(34,8){\textbf{II}}
        \put(24.5,-0.1){$\dots$}
        \put(23.7,-0.3){$\star$}
        \put(41.5,-0.1){$\dots$}
        \put(43.5,-0.3){$\star$}
        \put(27,0){\line(1,0){14}}
        \put(30,0){\line(0,1){5}}
        \put(30,3){\line(1,0){2}}
        \put(34,0){\line(0,1){5}}
        \put(34,3){\line(1,0){2}}
        \put(38,0){\line(0,1){5}}
        \put(38,3){\line(1,0){2}}
        \put(29.5,-0.5){$\bullet$}
        \put(33.5,-0.5){$\bullet$}
        \put(37.5,-0.5){$\bullet$}
        \put(54,8){\textbf{III}}
        \put(47,0){\line(1,0){14}}
        \put(49,0){\line(0,1){2}}
        \put(52,0){\line(0,1){5}}
        \put(52,3){\line(1,0){2}}
        \put(56,0){\line(0,1){5}}
        \put(56,3){\line(1,0){2}}
        \put(59,0){\line(0,1){2}}
        \put(51.5,-0.5){$\bullet$}
        \put(55.5,-0.5){$\bullet$}
        \put(69,8){\textbf{IV}}
        \put(65,0){\line(1,0){10}}
        \put(67,0){\line(0,1){2}}
        \put(70,0){\line(0,1){5}}
        \put(70,3){\line(1,0){2}}
        \put(73,0){\line(0,1){2}}
        \put(69.5,-0.5){$\bullet$}
    \end{picture}
    \caption{An infinite foliage (\textbf{I}).
    The $2$-tubular neighborhood of a maximal path
    (\textbf{II}), and edge (\textbf{II}) or a
    vertex (\textbf{IV}). The stem vertices 
    for the last three are denoted by bullets.
    We assume the residue field has 2 elements.
    Visual limits are denoted by stars.}
    \label{f1}
\end{figure}
 In \ref{dia3}, the central edge
or vertex is also called stem for the sake of uniformity.
Stem vertices can be characterized as those lying
the farthest from the set of leaves. The infinite foliage 
has no stem. The ray with v\'ertices
$v_0,v_1,v_2,\dots$ in \ref{dia1} is not special,
as there is a ray like this starting from every
leaf in the infinite foliage.
Another important property, which is critical for us
and can be found in \cite[Prop. 2.4]{Eichler2},
is given in the following proposition:

\begin{proposition}\label{p21}
    Let $\Ha$ be an arbitrary order in $\alge$, and consider
    the order $\Ha^{[r]}=\oink_K\mathbf{1}+\pi_K^r\Ha$. 
    Then the branch
    $\mathfrak{s}_K\left(\Ha^{[r]}\right)$ contains precisely
    the maximal orders at distance $r$ or less from the
    branch $\mathfrak{s}_K(\Ha)$ 
    of $\Ha$.\qed
\end{proposition}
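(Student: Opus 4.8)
The plan is to prove the two inclusions of the asserted equality separately, working throughout with a fixed base vertex $\Da=\mathbb{M}_2(\oink_K)=\mathrm{End}_{\oink_K}(\oink_K^2)$ and using repeatedly that every $\oink_K$-order contains $\oink_K\mathbf{1}$; in particular a maximal order $\Da$ contains $\Ha^{[r]}=\oink_K\mathbf{1}+\pi_K^r\Ha$ if and only if $\pi_K^r\Ha\subseteq\Da$. Since conjugation by an element of $\mathrm{GL}_2(K)$ is an automorphism of $\alge$ carrying the branches $\mathfrak{s}_K(\,\cdot\,)$, the tree distances, and the operation $(\,\cdot\,)^{[r]}$ to themselves, we may freely change the basis of $K^2$ at each step.

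For the inclusion ``$\supseteq$'', suppose $\Da$ lies at distance $d\le r$ from some $\Da_0\in\mathfrak{s}_K(\Ha)$. Choosing a basis in which the pair $(\Da,\Da_0)$ is in standard relative position (iterating (\ref{nba})), we may take $\Da=\mathbb{M}_2(\oink_K)$ and $\Da_0=\bmattrix{\oink_K}{\pi_K^{-d}\oink_K}{\pi_K^d\oink_K}{\oink_K}$. A routine entrywise check then gives $\oink_K\mathbf{1}+\pi_K^r\Da_0\subseteq\mathbb{M}_2(\oink_K)=\Da$, where only $r\ge d\ge 0$ is used. As $\Ha\subseteq\Da_0$, this yields $\Ha^{[r]}=\oink_K\mathbf{1}+\pi_K^r\Ha\subseteq\oink_K\mathbf{1}+\pi_K^r\Da_0\subseteq\Da$, so $\Da\in\mathfrak{s}_K(\Ha^{[r]})$.

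For the reverse inclusion, let $\Da\supseteq\Ha^{[r]}$ and write $\Da=\mathrm{End}_{\oink_K}(\oink_K^2)$. Put $M:=\Ha\cdot\oink_K^2\subseteq K^2$, the $\oink_K$-module spanned by all vectors $\mathbf{u}v$ with $\mathbf{u}\in\Ha$ and $v\in\oink_K^2$; it is finitely generated and, since $\oink_K\mathbf{1}\subseteq\Ha$, it contains $\oink_K^2$, hence is a full $\oink_K$-lattice. From $\pi_K^r\Ha\subseteq\Da$ we get $\pi_K^rM\subseteq\oink_K^2$, so $\oink_K^2\subseteq M\subseteq\pi_K^{-r}\oink_K^2$; the elementary divisor theorem for the discrete valuation ring $\oink_K$ then supplies an $\oink_K$-basis $w_1,w_2$ of $\oink_K^2$ and integers $0\le c_1\le c_2\le r$ with $M=\pi_K^{-c_1}\oink_K w_1\oplus\pi_K^{-c_2}\oink_K w_2$. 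Since $\Ha$ is closed under multiplication, $\Ha M=\Ha(\Ha\cdot\oink_K^2)\subseteq\Ha\cdot\oink_K^2=M$, so $\Ha\subseteq\Da_0:=\mathrm{End}_{\oink_K}(M)$, which is a maximal order (being the endomorphism ring of a full lattice). Finally $M$ is homothetic to $\pi_K^{c_2}M=\pi_K^{c_2-c_1}\oink_K w_1\oplus\oink_K w_2$, hence $\Da_0=\mathrm{End}_{\oink_K}(\pi_K^{c_2}M)$; a chain of $c_2-c_1$ adjacencies of type (\ref{nba}) joins $\Da$ to $\Da_0$, which is therefore at distance $c_2-c_1\le c_2\le r$ from $\Da$. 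Thus $\Da_0\in\mathfrak{s}_K(\Ha)$ lies within distance $r$ of $\Da$, i.e.\ $\Da$ lies within distance $r$ of $\mathfrak{s}_K(\Ha)$.

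Combining the two inclusions proves the proposition. The only delicate step is the second one, where everything is reduced to the elementary divisor normal form of the inclusion $\oink_K^2\subseteq M$ and to reading the distance $c_2-c_1$ off the lattice description of $\mathfrak{t}(K)$, so no idea beyond the lattice dictionary is needed. Alternatively one may reduce to the case $r=1$ via the identity $\Ha^{[r]}=\bigl(\Ha^{[r-1]}\bigr)^{[1]}$ and induct on $r$, but this does not shorten the argument; compare \cite[Prop.~2.4]{Eichler2}.
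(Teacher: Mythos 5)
Your proof is correct. The paper itself does not prove Proposition \ref{p21}; it states it with a reference to \cite[Prop.~2.4]{Eichler2}, so there is no in-text argument to compare against. What you supply is a complete, self-contained verification in the lattice model of $\mathfrak{t}(K)$, and both halves check out: the ``$\supseteq$'' direction is the routine entrywise computation $\pi_K^r\bmattrix{\oink_K}{\pi_K^{-d}\oink_K}{\pi_K^d\oink_K}{\oink_K}\subseteq\mathbb{M}_2(\oink_K)$ for $d\le r$, and the ``$\subseteq$'' direction correctly isolates the key object, the lattice $M=\Ha\cdot\oink_K^2$ with $\oink_K^2\subseteq M\subseteq\pi_K^{-r}\oink_K^2$, whose endomorphism ring is a maximal order containing $\Ha$ at distance $c_2-c_1\le r$ from $\Da$. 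Two small points worth making explicit if you write this up: first, $\pi_K^r\Ha\subseteq\Ha^{[r]}$ (take $0$ in the $\oink_K\mathbf{1}$ summand), which is what lets you pass from $\Da\supseteq\Ha^{[r]}$ to $\pi_K^rM\subseteq\oink_K^2$; second, the argument nowhere uses that $\Ha$ is full, which matters since the proposition is stated for arbitrary orders, and your construction of $\mathrm{End}_{\oink_K}(M)$ handles the degenerate cases (e.g.\ $\Ha=\oink_K\mathbf{1}$) uniformly. The payoff of your route over the bare citation is that it exhibits an explicit vertex of $\mathfrak{s}_K(\Ha)$ within distance $r$ of any given vertex of $\mathfrak{s}_K\left(\Ha^{[r]}\right)$, rather than merely asserting its existence.
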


Any order that can be written as an intersection of maximal orders can be written as the intersection of
the maximal orders containing it. In other words, if
$\Ha$ is an intersection of maximal orders, then
$\Ha=\Ha_{\mathfrak{s}}:=
\bigcap_{\Da\in\mathfrak{s}}\Da$, where
$\mathfrak{s}=\mathfrak{s}_K(\Ha)$. The map
$\mathfrak{s}\mapsto\Ha_{\mathfrak{s}}$
defines a bijection between subgraphs that
are branches of orders on one hand and intersections
of maximal orders on the other. The intersection
$\Ha_{\mathfrak{s}}$ is the largest order whose branch
is $\mathfrak{s}$. For a full order $\Ra$, and for any
maximal order $\Da$ containing $\Ra$, there is an integer
$r$ satisfying $\Da^{[r]}\subseteq \Ra$, and therefore
$\mathfrak{s}_K(\Ra)\subseteq
\mathfrak{s}_K\left(\Da^{[r]}\right)$.
Hence, there is a finite
number of maximal orders $\Da'$ satisfying $\Ra\subseteq\Da'$,
as the graph $\mathfrak{s}_K\left(\Da^{[r]}\right)$ is a ball
of radius $r$ by Prop. \ref{p21}.
In particular, a full order is an intersection of maximal
orders if and only if it is a finite intersection of
maximal orders.

\begin{example}
    An Eichler order $\Ea=\Da_1\cap\Da_2$ is contained precisely
    in the maximal orders in the line $\mathfrak{p}$
    from $\Da_1$ to $\Da_2$. It follows that
    $\mathfrak{s}_K(\Ea)=\mathfrak{p}$ and 
    $\Ha_\mathfrak{p}=\Ea$.
\end{example}

\begin{example}\label{e24}
    If $\mathfrak{b}$ is a ball of radius $r$ in 
    $\mathfrak{t}(K)$, i.e., the set of maximal orders
    at distance $r$ or less from a single vertex $\Da$,
    then the corresponding intersection $\Ha_{\mathfrak{b}}$
    of maximal orders is the order $\Da^{[r]}$, see 
    \cite[Lem. 2.5]{Eichler2}. Choose
    three maximal orders $\Da_1,\Da_2,\Da_3\in
    \mathfrak{b}$, such that the lines from $\Da$ 
    to any pair of the 
    $\Da_i$ intersect in a single point. These vertices
    are said to be spread out in the ball. 
    Equivalently, we say that
    $\Da$ is the branching vertex
    of the convex hull of $\{\Da_1,
    \Da_2,\Da_3\}$, as in 
    \cite[Def. 11]{Tu}.
    Then the intersection $\Da_1\cap\Da_2\cap\Da_3$
    equals the intersection $\Ha_{\mathfrak{b}}$
    by \cite[Thm. 8]{Tu}. It follows that 
    $\Da^{[r]}=\Da_1\cap\Da_2\cap\Da_3$.
    Now choose a maximal order $\Da'_2$
    beyond $\Da_2$, i.e., chosen in a way
    that the path from $\Da'_2$ to either
    $\Da_1$ or $\Da_3$ passes through $\Da_2$. Analogously, choose
    $\Da'_3$ beyond $\Da_3$.
    Then $\Da'_2$ lies at a distance
    $r_2\geq r$ from  the center $\Da$, while
    $\Da'_3$ lies at a distance
    $r_3\geq r$ from $\Da$. Let $\Da''_2$
     is the order at distance $r_2-r$ from $\Da$
     in the path from $\Da$ to $\Da'_2$, and let
     $\Da''_3$ be defined analogously. 
     Let $\mathfrak{p}$ the line from $\Da''_2$ to $\Da''_3$.
     Let $\mathfrak{s}$ be the graph containing
     precisely the vertices at distance $r$ or less from 
     $\mathfrak{p}$. See Fig. \ref{f2}.
     Then, it follows from \cite[Thm. 8]{Tu}
     that $\Da_1\cap\Da'_2\cap\Da'_3$ equals the intersection
     $\Ha_{\mathfrak{s}}$. The graph $\mathfrak{s}$
     can be seen as the union of all balls 
     $\mathfrak{b}(\hat{\Da})$ of radius $r$ and center
     $\hat{\Da}\in\mathfrak{p}$.  Recall that $\Ha_1^{[r]}\cap
     \Ha_2^{[r]}=(\Ha_1\cap\Ha_2)^{[r]}$ for any pair of orders
     $\Ha_1$ and $\Ha_2$ \cite[Prop. 2.1]{Eichler2}. This allows us to compute
     as follows:
     $$\Ha_{\mathfrak{s}}=\bigcap_{\hat{\Da}\in\mathfrak{p}}
     \Ha_{\mathfrak{b}(\hat{\Da})}=
     \bigcap_{\hat{\Da}\in\mathfrak{p}}
     \hat{\Da}^{[r]}=
     \left(\bigcap_{\hat{\Da}\in\mathfrak{p}}
     \hat{\Da}\right)\!\!\!\!\!\!\!\!
     {\phantom{\int}}^{[r]}=
     \Ha_{\mathfrak{p}}^{[r]}=\Ea^{[r]},$$
     where $\Ea=\Da''_2\cap\Da''_3$.
     Fig. \ref{f2} shows an example 
     where $r=2$, $r_2=3$ and $r_3=5$. 
\end{example}

\begin{figure}
    \centering
    \setlength{\unitlength}{2mm}
    \begin{picture}(40,15)(-5,-3)
        \put(3,0){\line(1,0){26}}
        \put(5,0){\line(0,1){2}}
        \put(8,0){\line(0,1){5}}
        \put(8,3){\line(1,0){2}}
        \put(12,0){\line(0,1){5}}
        \put(12,3){\line(1,0){2}}
        \put(16,0){\line(0,1){5}}
        \put(16,3){\line(1,0){2}}
        \put(20,0){\line(0,1){5}}
        \put(20,3){\line(1,0){2}}
        \put(24,0){\line(0,1){5}}
        \put(24,3){\line(1,0){2}}
        \put(27,0){\line(0,1){2}}
        \put(11.5,-0.5){$\bullet$}
        \put(15.5,-0.5){$\bullet$}
        \put(19.5,-0.5){$\bullet$}
        \put(15.5,2.5){$\bullet$}
        \put(19.5,2.5){$\bullet$}
        \put(23.5,2.5){$\bullet$}
        \put(19.5,4.5){$\bullet$}
        \put(21.5,2.5){$\bullet$}
        \put(19.5,-2){${}^{\Da}$}
        \put(23.5,-0.5){$\bullet$}
        \put(26.5,-0.5){$\bullet$}
        \put(28.5,0){$\cs$}
        \put(26.5,2){$\cs$}
        \put(25.5,3){$\cs$}
        \put(23.5,5){$\cs$}
        \put(17.5,3){$\cs$}
        \put(15.5,5){$\cs$}
        \put(13.5,3){$\cs$}
        \put(11.5,5){$\cs$}
        \put(9.5,3){$\cs$}
        \put(7.5,5){$\cs$}
        \put(7.5,3){$\cs$}
        \put(11.5,3){$\cs$}
        \put(7.5,3){$\cs$}
        \put(11.5,3){$\cs$}
        \put(7.5,0){$\cs$}
        \put(4.5,0){$\cs$}
        \put(2.5,0){$\cs$}
        \put(4.5,2){$\cs$}
        \put(19.5,5){${}^{\Da_1}$}
        \put(26.5,-2){${}^{\Da_2}$}
        \put(23.5,-2){${}^{\Da''_2}$}
        \put(29.5,-1){${}^{\Da'_2}$}
        \put(11.5,-2){${}^{\Da_3}$}
        \put(7.5,-2){${}^{\Da''_3}$}
        \put(0.5,-1){${}^{\Da'_3}$}
    \end{picture}
    \caption{The bullets correspond to the maximal orders
    containing $\Da_1\cap\Da_2\cap\Da_3$.
    the clubs denote the additional orders
    containing $\Da_1\cap\Da'_2\cap\Da'_3$.
    We assume $q=2$.}
    \label{f2}
\end{figure}

Theorem 8 in \cite{Tu} shows that every intersection of maximal order that is not Eichler can be obtained in this way. More precisely, the orders $\Da_1$, $\Da'_2$ and  $\Da'_3$ only need to maximize the sum $r+r_2+r_3$, which is denoted $\frac12d_3(S)$ in \cite[\S4]{Tu}. The following two results
are straightforward consequences of this fact:

\begin{proposition}\label{p22n}
    A full order is a finite intersection of maximal orders precisely when its Gorenstein closure is Eichler.\qed
\end{proposition}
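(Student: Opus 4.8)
\emph{Proof proposal.} The plan is to obtain both implications by combining Tu's classification of finite intersections of maximal orders, recalled in the introduction, with the explicit computation of Example \ref{e24} and the uniqueness of the decomposition (\ref{gor}).

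Suppose first that $\Ra$ is a finite intersection of maximal orders. By Tu's theorem, $\Ra$ is either Eichler or an intersection of three non-collinear maximal orders. If $\Ra$ is Eichler, then it is Gorenstein (recall Eichler orders are Gorenstein, see \cite[\S24.2]{voight}), so $\mathrm{Gor}(\Ra)=\Ra$ is Eichler. Otherwise, Example \ref{e24} and the discussion following it show that $\Ra=\Ea^{[r]}=\oink_K\mathbf{1}+\pi_K^r\Ea$ for an Eichler order $\Ea$ and an integer $r\geq 1$. Since $\Ea$ is Gorenstein and $\pi_K^r\oink_K$ is a proper nonzero ideal of $\oink_K$, this is exactly the decomposition (\ref{gor}) for $\Ra$; by its uniqueness, $\mathrm{Gor}(\Ra)=\Ea$, which is Eichler.

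Conversely, assume that $\tilde{\Ra}=\mathrm{Gor}(\Ra)$ is Eichler, and write $\Ra=\oink_K\mathbf{1}+I\tilde{\Ra}$ as in (\ref{gor}). As $I=\pi_K^r\oink_K$ for some $r\geq 0$, this says $\Ra=\tilde{\Ra}^{[r]}$. If $r=0$ then $\Ra=\tilde{\Ra}$ is itself an intersection of at most two maximal orders. If $r\geq 1$, write $\tilde{\Ra}=\Da\cap\Da'$ with $\Da,\Da'$ maximal (possibly equal); then \cite[Prop. 2.1]{Eichler2} gives $\Ra=\tilde{\Ra}^{[r]}=\Da^{[r]}\cap(\Da')^{[r]}$, and each $\Da^{[r]}$ is, by Example \ref{e24}, the intersection of the finitely many maximal orders in a ball of radius $r$. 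Hence $\Ra$ is a finite intersection of maximal orders. (Alternatively, one may run the construction of Example \ref{e24} directly to present $\tilde{\Ra}^{[r]}$ as an intersection of at most three maximal orders.)

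There is no deep obstacle once this machinery is in place; the only step that needs care is the identification of the Gorenstein closure in each direction, which rests on two inputs: that Eichler orders are Gorenstein, and the uniqueness of the decomposition (\ref{gor}) from \cite[Prop. 24.2.15]{voight}. Both are available, so the proof reduces to assembling the pieces above and checking the degenerate cases ($\Ra$ already Eichler, or $\tilde{\Ra}$ maximal).
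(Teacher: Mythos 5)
Your proposal is correct and follows essentially the same route the paper intends: Proposition \ref{p22n} is stated there as an immediate consequence of Tu's classification together with the computation $\Da_1\cap\Da'_2\cap\Da'_3=\Ea^{[r]}$ from Example \ref{e24}, and your argument simply spells out those two inputs plus the uniqueness of the decomposition (\ref{gor}) to identify the Gorenstein closure in each direction. Nothing is missing.
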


\begin{proposition}\label{p22}
    The branch $\mathfrak{s}_K(\Ra)$ 
    of every full order 
    $\Ra$ consists of
    the maximal orders at distance 
    $r\geq0$ or less from a 
    finite line $\mathfrak{p}$ called 
    the stem of the
    branch.\qed
\end{proposition}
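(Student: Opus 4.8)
The plan is to deduce Proposition~\ref{p22} from the structure of intersections of maximal orders recalled around Example~\ref{e24}, after first reducing to that case. Write $\mathfrak{s}=\mathfrak{s}_K(\Ra)$. Since $\Ra$ is full, the discussion preceding Example~\ref{e24} shows that only finitely many maximal orders contain $\Ra$, so $\mathfrak{s}$ is a finite, non-empty, connected subgraph of $\mathfrak{t}(K)$. Now pass from $\Ra$ to the largest order having this branch, namely $\Ha_{\mathfrak{s}}=\bigcap_{\Da\in\mathfrak{s}}\Da$: by the bijection between branches and intersections of maximal orders, $\mathfrak{s}_K(\Ha_{\mathfrak{s}})=\mathfrak{s}$, and $\Ha_{\mathfrak{s}}$ is a \emph{finite} intersection of maximal orders. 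Hence it suffices to prove that the branch of an arbitrary finite intersection of maximal orders has the asserted shape.

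For this I would invoke \cite{Tu}. By the theorem quoted in the Introduction, $\Ha_{\mathfrak{s}}$ equals an intersection of at most three maximal orders, and one of two cases occurs. If $\Ha_{\mathfrak{s}}$ is Eichler, then, as in the Eichler example preceding Example~\ref{e24}, its branch is the finite line joining the one or two maximal orders cutting it out; this is the instance $r=0$ of the statement, with $\mathfrak{p}=\mathfrak{s}$. Otherwise $\Ha_{\mathfrak{s}}$ is an intersection of three non-collinear maximal orders, and by \cite[Thm.~8]{Tu}, unwound in Example~\ref{e24}, it can be presented as $\Da_1\cap\Da'_2\cap\Da'_3$ with the defining orders chosen to maximize $r+r_2+r_3$; the computation in Example~\ref{e24} then identifies it with $\Ea^{[r]}$ for an Eichler order $\Ea=\Da''_2\cap\Da''_3$ and an integer $r\geq1$. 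Applying Proposition~\ref{p21} with $\Ha=\Ea$, the branch $\mathfrak{s}_K(\Ea^{[r]})$ consists exactly of the maximal orders at distance $r$ or less from $\mathfrak{s}_K(\Ea)$, and the latter is the finite line $\mathfrak{p}$ from $\Da''_2$ to $\Da''_3$ (a single vertex when these coincide). In both cases $\mathfrak{s}_K(\Ra)=\mathfrak{s}$ is the set of maximal orders within distance $r$ of a finite line $\mathfrak{p}$, for some $r\geq0$.

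It then remains to justify the term ``stem'': one observes that $\mathfrak{p}$ and $r$ are recovered from $\mathfrak{s}$ alone, the stem vertices being those farthest from the set of leaves of $\mathfrak{s}$, exactly as discussed after the list \ref{dia1}--\ref{dia3}; so the decomposition is canonical and $\mathfrak{p}$ does not depend on the auxiliary choices above. I do not expect any genuinely hard step here, since essentially everything is borrowed from \cite{Tu} together with Proposition~\ref{p21}. The only point deserving a line of care is the opening reduction — that $\mathfrak{s}_K(\Ra)$ is realized as the branch of a \emph{finite} intersection of maximal orders even when $\Ra$ itself is not such an intersection — and this is precisely where fullness of $\Ra$, hence finiteness of $\mathfrak{s}_K(\Ra)$, is used.
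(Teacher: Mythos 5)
Your argument is correct and follows exactly the route the paper intends: it leaves Proposition~\ref{p22} as a direct consequence of \cite[Thm.~8]{Tu} together with the computation $\Ha_{\mathfrak{s}}=\Ea^{[r]}$ in Example~\ref{e24} and Proposition~\ref{p21}, which is precisely what you unwind (your opening reduction to the finite intersection $\Ha_{\mathfrak{s}}$, via finiteness and connectedness of the branch of a full order, is the same implicit step the paper relies on). No substantive difference from the paper's reasoning.
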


The subgraph of $\mathfrak{t}(K)$ 
consisting precisely of
the orders at distance $r$ or less from a  
$\mathfrak{p}$ is denoted
$\mathfrak{p}^{[r]}$ and called the $r$-th 
tubular neighborhood of $\mathfrak{p}$, or the 
tubular neighborhood of width $r$ of 
$\mathfrak{p}$.
There is a stronger version of 
Proposition \ref{p22} that is shown 
in \cite[Prop. 5.3]{Eichler2} and 
\cite[Prop. 5.4]{Eichler2}:

\begin{proposition}\label{p23}
    The branch $\mathfrak{s}_K(\Ha)$ of 
    every order $\Ha$ 
    is either an infinite foliage, or a tubular 
    neighborhood $\mathfrak{p}^{[r]}$ of a line
    $\mathfrak{p}$, which could  be a ray
    or a maximal line. The infinite foliage
    appears precisely when $\Ha$ is generated by
    a nilpotent element.
    \qed
\end{proposition}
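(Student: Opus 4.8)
The plan is to run a case analysis on the isomorphism type of the $K$-algebra $A:=\Ha\otimes_{\oink_K}K$. Aside from the degenerate case $\Ha=\oink_K\mathbf 1$ (contained in every maximal order, and irrelevant here), we may choose non-central generators and write $\Ha=\oink_K[\mathbf u_1,\dots,\mathbf u_n]$, so that (\ref{gens}) presents $\mathfrak s_K(\Ha)=\bigcap_i\mathfrak s_K(\mathbf u_i)$ as a finite intersection of the graphs described in \ref{dia1}--\ref{dia3}: infinite foliages and tubular neighbourhoods of maximal lines, edges or vertices. Up to conjugacy $A$ is one of $K$, $K\times K$, a quadratic field $L$, $K[x]/(x^2)$, the algebra $T$ of upper triangular matrices, or $\alge$ itself; this list is complete because a $K$-linear subspace of nilpotent matrices in $\alge$ is at most one-dimensional, which rules out any other three-dimensional subalgebra and pins down the low-dimensional ones.

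Three cases are immediate. If $A\cong K[x]/(x^2)$, then $\Ha=\oink_K\mathbf 1+\pi_K^r\oink_K\mathbf n=\oink_K[\pi_K^r\mathbf n]$ for a nilpotent $\mathbf n$, so $\mathfrak s_K(\Ha)=\mathfrak s_K(\pi_K^r\mathbf n)$ is an infinite foliage by \ref{dia1}; this is the ``foliage'' alternative and establishes the ``if'' direction of the final sentence. If $A\cong K\times K$ or $A\cong L$, then $\Ha=\oink_K[\mathbf u]$ is monogenic and \ref{dia2}--\ref{dia3} identify $\mathfrak s_K(\Ha)$ with a tubular neighbourhood of a maximal line, an edge, or a vertex. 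If $A=\alge$, i.e.\ $\Ha$ is full, Proposition \ref{p22} already gives a tubular neighbourhood of a finite line. What remains is the case $A=T$.

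So suppose $\Ha$ has rank $3$; after conjugation $\Ha\subseteq T$. The projection $T\to K\times K$ carries $\Ha$ onto an order $\oink_K[\mathbf d]$ with $\mathbf d$ of type \ref{dia2}, and $\Ha\cap N=\oink_K\mathbf n$ for the nilradical $N$ of $T$ and a nilpotent $\mathbf n$; since $\oink_K[\mathbf d]$ is $\oink_K$-free the resulting sequence of $\oink_K$-modules splits, giving $\Ha=\oink_K[\mathbf d,\mathbf n]$ and hence $\mathfrak s_K(\Ha)=\mathfrak s_K(\mathbf d)\cap\mathfrak s_K(\mathbf n)=\mathfrak q^{[s]}\cap\mathfrak F$, where $\mathfrak q$ is the maximal line that is the stem of $\mathfrak s_K(\mathbf d)$ and $\mathfrak F=\mathfrak s_K(\mathbf n)$ is an infinite foliage. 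The distinguished visual limit of $\mathfrak F$, namely the point of $\mathbb P^1(K)$ determined by $\ker\mathbf n=\mathrm{im}\,\mathbf n$, is an eigenline of $\mathbf d$ and therefore an end $\xi$ of $\mathfrak q$. A direct computation with the defining balls $\mathfrak b_i$ of $\mathfrak F$ then shows $\mathfrak q^{[s]}\cap\mathfrak F$ is exactly a tubular neighbourhood of the sub-ray of $\mathfrak q$ pointing towards $\xi$. This branch is not an infinite foliage, since it lies in $\mathfrak q^{[s]}$, which contains no ball of radius $>s$, whereas a foliage contains balls of every radius; and it is not the tube of a maximal line, for that would force $\Ha\subseteq\Ha_{\mathfrak q}=\diag(\oink_K,\oink_K)$, of rank $2$. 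Assembling the cases yields the dichotomy, the infinite foliage occurring exactly when $A\cong K[x]/(x^2)$, i.e.\ exactly when $\Ha$ is generated by a nilpotent element.

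The main obstacle is the rank-three case, and inside it the claim that $\mathfrak q^{[s]}\cap\mathfrak F$ is a tube of \emph{constant} width over a ray: the crude picture of a foliage meeting the thickened apartment makes the width look as though it grows near $\xi$. The clean way to pin it down is to combine the ball computation with the reduction $\mathfrak s_K(\Ha)=\mathfrak s_K\big(\mathrm{Gor}(\Ha)\big)^{[r]}$ furnished by (\ref{gor}) and Proposition \ref{p21}, together with the (short) list of Gorenstein orders in each rank and the immediate identification of their branches; this is essentially the argument of \cite[Prop.~5.3 and Prop.~5.4]{Eichler2}, which may simply be cited. One also needs the routine facts that an $r$-neighbourhood of $\mathfrak p^{[s]}$ is $\mathfrak p^{[s+r]}$ and that an $r$-neighbourhood of an infinite foliage is again an infinite foliage (re-root the defining ray).
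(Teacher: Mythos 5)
The paper offers no proof of this proposition at all: it is stated with a \qed and attributed to \cite[Props.~5.3 and 5.4]{Eichler2}, so what you have written is a reconstruction of the cited argument rather than a parallel to anything in this text. As a reconstruction it is essentially sound, and your strategy --- classify the enveloping algebra $\Ha\otimes_{\oink_K}K$ up to conjugacy as $K$, $K\times K$, a quadratic field, $K[x]/(x^2)$, the upper triangular algebra, or $\alge$, and feed all but the triangular case directly into \ref{dia1}--\ref{dia3} and Proposition \ref{p22} --- is the natural one; your justification that no other subalgebras occur (a linear space of nilpotent $2\times2$ matrices is at most one-dimensional) is correct. The only point where you assert rather than prove is the crux you yourself flag: that for a rank-three order the intersection of the foliage $\mathfrak{s}_K(\mathbf n)$ with the tube $\mathfrak{s}_K(\mathbf d)$ is a \emph{constant}-width tube over a ray toward the common eigenline. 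Your proposed repair does work and is worth making explicit: writing $\Ha=\oink_K\mathbf 1+\pi_K^s\Ha'$ with $\Ha'=\oink_K\mathbf{x}_{11}+\oink_K\mathbf{x}_{22}+\pi_K^{t}\oink_K\mathbf{x}_{12}$ (after conjugation), one checks directly on lattices that $\mathfrak{s}_K(\Ha')$ is a bare ray inside the standard apartment, and then Proposition \ref{p21} converts the branch of $\Ha$ into the $s$-th tubular neighborhood of that ray, which is exactly the allowed shape; the ``growing width near the visual limit'' worry dissolves because the extra vertices absorbed from the foliage are precisely those within distance $s$ of the ray. Two small caveats: the degenerate order $\oink_K\mathbf 1$ has the whole tree as its branch and must be excluded by convention, just as the paper implicitly does when it discards central generators; and the reduction via the Gorenstein closure that you invoke at the end is only literally available for full orders, so for the rank-two and rank-three cases you should rely on the elementary decompositions $\oink_K\mathbf 1+\pi_K^r(\cdot)$ rather than on equation (\ref{gor}).
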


\paragraph{Proof of Theorem \ref{t11}.}
    If $\Ra$ is not a Bass order, it has a
    supra-order $\Ra'$ that is not Gorenstein,
    i.e., $\Ra'=\Ra_0^{[r]}$, where $r\geq1$
    is an integer and $\Ra_0$ is a full order. 
    Then $\Ra$ is contained in every maximal 
    orders at distance $r$ from the branch of
    $\Ra_0$, and therefore its branch
    is not a line.
    On the other hand, if the branch
    $\mathfrak{s}$ of $\Ra$ is not a line,
    it is a tubular neighborhood of width $r\geq1$
    of a line and we are in the case described
    in Ex. \ref{e24}. This means that $\Ha_{\mathfrak{s}}=\Ea^{[r]}$ is an supra-order that is not Gorenstein.
\qed

\section{Trees and field extensions}

If $L/K$ is an extension of local fields, and if 
$\Da$ is a maximal $\oink_K$-order in
$\alge\subseteq\alge_L$, then
$\Da_L=\oink_L\Da$ is a maximal $\oink_L$-order
in $\mathbb{M}_2(L)$.
Whenever $L/K$ is an unramified 
extension, the map $\Da\mapsto\Da_L$
identifies $\mathfrak{t}(K)$ with a subgraph 
 of $\mathfrak{t}(L)$. In the 
general case, however, we can only
identify the realization 
$\mathrm{real}\big(\mathfrak{t}(K)\big)$ with
the realization of a subgraph $\mathfrak{t}(L/K)$
of $\mathfrak{t}(L)$ where the extensions
$\Da_L$ and $\Da'_L$ of two neighboring vertices
$\Da$ and $\Da'$ of $\mathfrak{t}(K)$ lie
at a distance $e=e(L/K)$.
See \cite[\S3]{GB1} for details and
Fig. \ref{f3nn} for some examples.
\begin{figure}
    \centering
    \unitlength 1mm 
\linethickness{0.4pt}
\ifx\plotpoint\undefined\newsavebox{\plotpoint}\fi 
\begin{picture}(77,16)(0,24)
\put(11,29){\line(1,0){12}}
\put(17,29){\line(0,1){6}}
\put(34,29){\line(1,0){16}}
\put(42,29){\line(0,1){8}}
\put(10,28){$\bullet$}\put(22,28){$\bullet$}
\put(16,28){$\bullet$}\put(16,34){$\bullet$}
\put(33,28){$\bullet$}\put(49,28){$\bullet$}
\put(41,28){$\bullet$}\put(41,36){$\bullet$}
\multiput(12,24.5)(.0410156,.0332031){16}{\line(1,0){.0410156}}
\multiput(13.25,25.6)(.0410156,.0332031){16}{\line(1,0){.0410156}}
\multiput(14.5,26.7)(.0410156,.0332031){16}{\line(1,0){.0410156}}
\multiput(15.75,27.8)(.0410156,.0332031){16}{\line(1,0){.0410156}}
\multiput(17.25,28.3)(.0410156,-.0332031){16}{\line(1,0){.0410156}}
\multiput(18.5,27.2)(.0410156,-.0332031){16}{\line(1,0){.0410156}}
\multiput(19.75,26.1)(.0410156,-.0332031){16}{\line(1,0){.0410156}}
\multiput(21,25)(.0410156,-.0332031){16}{\line(1,0){.0410156}}
\put(10.5,23){$*$}\put(21.5,23){$*$}
\put(42,33){\line(1,0){1}}
\put(44,33){\line(1,0){1}}
\put(46,33){\line(1,0){1}}
\put(47,32.25){$*$}\put(41,32.25){$*$}
\put(38,29){\line(0,-1){1}}
\put(38,27){\line(0,-1){1}}
\put(38,25){\line(0,-1){1}}
\put(37,28){$*$}\put(37,23){$*$}
\put(46,29){\line(0,-1){1}}
\put(46,27){\line(0,-1){1}}
\put(46,25){\line(0,-1){1}}
\put(45,28){$*$}\put(45,23){$*$}
\put(62,32){\line(1,0){15}}
\put(61,31){$\bullet$}\put(76,31){$\bullet$}
\put(67,27){\line(0,1){1}}
\put(67,29){\line(0,1){1}}
\put(67,31){\line(0,1){1}}
\put(66,31){$*$}\put(66,26){$*$}
\put(72,27){\line(0,1){1}}
\put(72,29){\line(0,1){1}}
\put(72,31){\line(0,1){1}}
\put(71,31){$*$}\put(71,26){$*$}
\put(09,37){$\textnormal{\textbf{I}}$}
\put(33,37){$\textnormal{\textbf{II}}$}
\put(67,37){$\textnormal{\textbf{III}}$}
\end{picture}
    \caption{A piece of the realization of
    the Bruhat-Tits tree
    $\mathfrak{t}(K)$, with full lines and bullets, inside the realization of
    $\mathfrak{t}(L)$ with dashed lines
    and asterisks. In \textbf{I}, $L/K$
    is an unramified extension. In
    \textbf{II}, $L/K$ is an ramified
    quadratic extension. In
    \textbf{III}, with only one edge of 
    $\mathfrak{t}(K)$ shown,
    $L/K$ is an ramified
    Cubic extension. The dashed lines
    lie outside $\mathfrak{t}(L/K)$.}
    \label{f3nn}
\end{figure}
Recall that $e(L/K)$
denote the ramification index of the extension $L/K$.
The description of the branch 
$\mathfrak{s}_K(\oink_{\lu})$,
when $\lu$ is isomorphic to a quadratic extension
$L$ of $K$ can be refined in terms of the tree 
$\mathfrak{t}(L/K)$,
as described in the following results:

\begin{proposition}\label{p25}
    Assume $\lu\subseteq\alge$ is
    isomorphic to an unramified quadratic
    extension $L$ of $K$.
    Let $\lu_L=L\lu 
    \subseteq\alge_L$, which is
    isomorphic to $L\times L$,
    be the extension of scalars of $\lu$ to $L$, 
    and let $\Ha=\oink_{\lu_L}$ be its ring of 
    integers. Then the line $\mathfrak{s}_L(\Ha)
    \subseteq\mathfrak{t}(L)$ contains a single
    vertex of $\mathfrak{t}(L/K)$, 
    namely the vertex
    $\Da_L$, where $\Da$ is the only maximal order
    containing $\oink_{\lu}$.
\end{proposition}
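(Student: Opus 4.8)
}
The plan is to combine the branch computation \ref{dia2} over $L$ with the elementary descent identity $\Da'_L\cap\alge=\Da'$, valid for every maximal $\oink_K$-order $\Da'$ whenever $L/K$ is unramified. First I would pin down the structure of $\Ha$ over $L$. Since $L/K$ is unramified and $\lu\cong L$, the $\oink_K$-algebra $\oink_\lu$ is \'etale and monogenic: write $\oink_\lu=\oink_K[\theta]$ with $\theta$ a lift of a generator of the residue extension, so that $\theta$ has a monic minimal polynomial $g\in\oink_K[x]$ whose reduction $\bar g$ is separable of degree $2$. Then $\oink_L\oink_\lu=\oink_L[\theta]=\oink_L[x]/(g)$, and reducing modulo $\pi_L$ gives $k_L[x]/(\bar g)\cong k_L\times k_L$, because $k_L$ already contains the two (distinct) roots of $\bar g$. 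Hence $\oink_L\oink_\lu$ is already the maximal order $\Ha=\oink_{\lu_L}$ of the split algebra $\lu_L\cong L\times L$. Applying \ref{dia2} over $L$ to any $\oink_L$-algebra generator of $\Ha$ (for which the integer $r$ is $0$) shows that $\ell:=\mathfrak{s}_L(\Ha)$ is a maximal line in $\mathfrak{t}(L)$, namely the line of the statement.

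Next I would verify that $\Da_L$ lies on $\ell$. By relation (\ref{gens}), $\mathfrak{s}_K(\oink_\lu)=\mathfrak{s}_K(\mathbf{u})$ for any $\oink_K$-algebra generator $\mathbf{u}$ of $\oink_\lu$, and since $\lu\cong L$ is an unramified quadratic extension, \ref{dia3} tells us that this branch is a single vertex; let $\Da$ be the corresponding maximal $\oink_K$-order, which is the one named in the statement. Then $\Da_L=\oink_L\Da\supseteq\oink_L\oink_\lu=\Ha$, so $\Da_L\in\mathfrak{s}_L(\Ha)=\ell$. As $L/K$ is unramified, the vertices of $\mathfrak{t}(L/K)$ are precisely the orders of the form $\Da'_L$ with $\Da'$ maximal over $\oink_K$, so $\Da_L\in\mathfrak{t}(L/K)$ as well, and $\Da_L\in\ell\cap\mathfrak{t}(L/K)$.

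It remains to show $\ell\cap\mathfrak{t}(L/K)=\{\Da_L\}$. Let $\Da''_L$ be any vertex of $\ell$ lying in $\mathfrak{t}(L/K)$, with $\Da''$ a maximal $\oink_K$-order. From $\Da''_L\supseteq\Ha\supseteq\oink_\lu$ and $\oink_\lu\subseteq\alge$ we get $\oink_\lu\subseteq\Da''_L\cap\alge$. Choosing an $\oink_K$-basis $1,\omega$ of $\oink_L$ and writing elements of $\oink_L\Da''$ as $d_0+\omega d_1$ with $d_0,d_1\in\Da''$, such an element lies in $\alge$ if and only if $d_1=0$; hence $\Da''_L\cap\alge=\Da''$, and therefore $\oink_\lu\subseteq\Da''$. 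By the uniqueness of $\Da$ obtained from \ref{dia3}, $\Da''=\Da$, whence $\Da''_L=\Da_L$. This proves the proposition.

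The only points requiring a little care are the maximality statement $\oink_L\oink_\lu=\oink_{\lu_L}$ --- which is exactly where unramifiedness of $\lu/K$ is used --- and the descent identity $\Da''_L\cap\alge=\Da''$; both are routine, so I do not expect a genuine obstacle. One could alternatively argue uniqueness through the $\mathrm{Gal}(L/K)$-action: $\sigma$ preserves $\Ha$, hence the line $\ell$, and since $\sigma$ acts nontrivially on $\lu_L\cong L\times L$ it interchanges the two visual limits of $\ell$, so it fixes exactly one point of $\mathrm{real}(\ell)$; identifying that fixed point with the vertex $\Da_L$ recovers the claim.
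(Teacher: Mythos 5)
Your proposal is correct, and its first half (showing $\oink_L\oink_\lu$ is already the maximal order $\Ha$ of $\lu_L\cong L\times L$, so that $\mathfrak{s}_L(\Ha)$ is a maximal line by \ref{dia2} with $r=0$ and contains $\Da_L$) matches the paper's proof. Where you genuinely diverge is the uniqueness step. The paper argues via the Galois action: $\mathrm{Gal}(L/K)$ preserves the line, so a second fixed vertex would force the action on the line to be trivial, hence both visual limits would be Galois-invariant points of $\mathbb{P}^1(L)$, hence in $\mathbb{P}^1(K)$, making the entire line lie in $\mathfrak{t}(L/K)$ and contradicting the fact that $\mathfrak{s}_K(\mathbf{u})$ is a single vertex. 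You instead go directly through the descent identity $\Da''_L\cap\alge=\Da''$ (valid since $\oink_L$ is $\oink_K$-free with a basis containing $1$), which turns any $K$-rational vertex of the line into a maximal $\oink_K$-order containing $\oink_\lu$, and then invoke the uniqueness in \ref{dia3}. Your route is shorter and avoids the dynamics of isometries of a line; it also makes explicit a descent step that the paper's contradiction ultimately relies on anyway. The paper's Galois argument, on the other hand, is the template reused in Propositions \ref{p26} and \ref{p27}, where the fixed locus is a midpoint rather than a vertex and a purely order-theoretic descent would not suffice, so it buys uniformity across the three cases. Your closing alternative (the generator $\sigma$ swaps the two visual limits, hence fixes exactly one point of $\mathrm{real}(\ell)$, which must be $\Da_L$) is essentially the paper's argument run in the contrapositive direction and is also fine, provided you note, as you do, that $\Da_L$ is a $\sigma$-fixed vertex already known to lie on $\ell$.
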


\begin{proof}
    Recall that $\oink_{\lu}$ is contained in
    a unique maximal order 
    according to \ref{dia3}.
    Let $\mathbf{u}$ be any matrix satisfying
    $\oink_K[\mathbf{u}]=
    \oink_{\lu}$. Then, since $L/K$
    is an unramified extension, 
    the minimal polynomial
    of $\mathbf{u}$ has two eigenvalues in 
    $\oink_L$ with different
    images in the residue field of 
    $L$. In particular,
    $\mathbf{u}$ generates 
    $\Ha\cong\oink_L\times\oink_L$ 
    as an $\oink_L$-order, 
    so that $\Da_L$ contains
    $\Ha$. Furthermore, the 
    branch $\mathfrak{s}_L(\Ha)$,
    which is shown to be a line by 
    setting $r=0$ in \ref{dia2},
    must be preserved by the Galois 
    group $\mathrm{Gal}(L/K)$,
    since so is the algebra $\lu$. If the line
    $\mathfrak{s}_L(\Ha)$ contained 
    a second vertex
    of the form
    $\Da'_L$ where $\Da'\subseteq\alge$ is a 
    maximal $\oink_K$-order, then the 
    Galois group, 
    preserving two points in a line, 
    must act trivially,
    and therefore it must preserve both
    visual limits of the line, but 
    then those visual limits, as 
    Galois-invariant elements 
    in $\mathbb{P}^1(L)$,
    should actually belong to $\mathbb{P}^1(K)$,
    and therefore every vertex in the line  must
    belong to $\mathfrak{t}(L/K)$, so every
    other of these vertices corresponds
    to a vertex of $\mathfrak{t}(K)$. 
    This contradicts the fact that 
    $\mathfrak{s}_K(\mathbf{u})$
    has a single vertex. The result follows.
\end{proof}

\begin{proposition}\label{p26}
    Assume $\lu\subseteq\alge $
    is isomorphic to a ramified
    separable extension $L$ of $K$.
    Let $e$ be the edge in $\mathfrak{t}(K)$
    connecting the two maximal orders of $\alge $
    containing $\oink_{\lu}$.
    Let $\lu_L=L\lu\cong L\times L$
    be the extension of $\lu$, and let 
    $\Ha=\oink_{\lu_L}$ be its ring of integers. 
    Then the vertex $w$ of $\mathfrak{t}(L/K)$ whose distance
    $d_w$ to $\mathfrak{s}_L(\Ha)$ is minimal
    is the midpoint of the edge $e$. 
    Furthermore, $d_w=0$ if $K$ is non-dyadic, while
    $d_w>0$ for a dyadic field $K$.
\end{proposition}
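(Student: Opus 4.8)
The plan is to transport the whole configuration into the Bruhat--Tits tree $\mathfrak{t}(L)$ and to determine the position of the line $\mathfrak{s}_L(\Ha)$ relative to the edge $e$, viewed inside $\mathfrak{t}(L/K)$, by combining one explicit lattice computation with Galois symmetry. Let $\Da_0,\Da_1$ be the two maximal $\oink_K$-orders containing $\oink_{\lu}$ (the endpoints of $e$, by case \ref{dia3} with $r=0$), and let $w$ denote the midpoint of $e$, i.e.\ the middle vertex of the length-two geodesic $(\Da_0)_L - w - (\Da_1)_L$ of $\mathfrak{t}(L)$; we must show that $w$ is the unique vertex of $\mathfrak{t}(L/K)$ nearest to $\mathfrak{s}_L(\Ha)$ and compute $d_w$. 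Since $\oink_{\lu}\cong\oink_L$ is monogenic over $\oink_K$, choose $\mathbf{u}\in\alge$ corresponding to a uniformizer of $\oink_L$, so $\oink_K[\mathbf{u}]=\oink_{\lu}=\oink_K\mathbf{1}\oplus\oink_K\mathbf{u}$ and the minimal polynomial of $\mathbf{u}$ is Eisenstein over $\oink_K$. Over $L$ the split algebra $\lu_L\cong L\times L$ carries the two eigenvalues $\lambda$ and $\lambda'=\sigma(\lambda)$ of $\mathbf{u}$, where $\sigma$ generates $\mathrm{Gal}(L/K)$; then $\Ha=\oink_{\lu_L}\cong\oink_L\times\oink_L$, the branch $\mathfrak{s}_L(\Ha)$ is a maximal line by case \ref{dia2} (with ends the eigenlines $\xi,\xi'\in\mathbb{P}^1(L)$ of $\mathbf{u}$, which $\sigma$ interchanges), and $\oink_L[\mathbf{u}]=\oink_L\mathbf{1}+\pi_L^{\delta}\Ha$ with $\delta:=v_L(\lambda-\lambda')$. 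As $\{1,\mathbf{u}\}$ is an $\oink_K$-basis of $\oink_{\lu}\cong\oink_L$, the quantity $(\lambda-\lambda')^2=\disc(\mathbf{u})$ generates the relative discriminant, so $\delta=v_K(\mathfrak{d}_{L/K})$; it is standard that for a ramified quadratic extension this is $1$ when $L/K$ is tamely ramified (i.e.\ when $K$ is non-dyadic) and at least $2$ when $K$ is dyadic.

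Next I would locate the line. Working in a basis in which $\Da_0=\mathbb{M}_2(\oink_K)$ and $\mathbf{u}$ is the companion matrix of its minimal polynomial, I would exhibit the ray from $(\Da_0)_L$ towards $\xi$ as a descending chain of $\oink_L$-lattices $\Lambda^{(0)}\supset\Lambda^{(1)}\supset\Lambda^{(2)}\supset\cdots$ and verify, by a direct computation, that $\Lambda^{(1)}=w$, that $\Lambda^{(n)}$ lies on $\mathfrak{s}_L(\Ha)$ if and only if $n\geq\delta$, and that $Q:=\Lambda^{(\delta)}$ is fixed by $\sigma$. It follows that $d\big((\Da_0)_L,\mathfrak{s}_L(\Ha)\big)=\delta$, that $Q$ is the point of $\mathfrak{s}_L(\Ha)$ nearest to both $(\Da_0)_L$ and $w$, and that $d\big(w,\mathfrak{s}_L(\Ha)\big)=\delta-1$. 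The middle statement — that $\delta$ is exactly the index at which the ray bends onto the axis $\mathfrak{s}_L(\Ha)$, and that $\Lambda^{(1)}$ is precisely the midpoint $w$ — is the step I expect to require the most care; it is also where the non-dyadic/dyadic dichotomy enters.

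Finally I would upgrade the equality $d(w,\mathfrak{s}_L(\Ha))=\delta-1$ to the assertion that $w$ is the \emph{unique} nearest vertex of $\mathfrak{t}(L/K)$. Since $\sigma$ is trivial on the $K$-algebra $\alge$, it acts trivially on $\mathrm{real}\big(\mathfrak{t}(L/K)\big)$, fixing $(\Da_0)_L$, $(\Da_1)_L$ and $w$; it stabilizes $\mathfrak{s}_L(\Ha)$ and flips it (swapping $\xi$ and $\xi'$), so $Q$ is its unique fixed point on $\mathfrak{s}_L(\Ha)$, and, arguing as in the proof of Proposition \ref{p25}, $\mathfrak{s}_L(\Ha)$ meets $\mathfrak{t}(L/K)$ in at most one vertex. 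In $\mathfrak{t}(L/K)$ the vertex $w$ has just the two neighbours $(\Da_0)_L$ and $(\Da_1)_L$, so the ray from $(\Da_0)_L$ towards $\xi$ leaves $\mathfrak{t}(L/K)$ exactly at $w$; hence the geodesic $[(\Da_0)_L,Q]=\Lambda^{(0)}-\cdots-\Lambda^{(\delta)}$ meets $\mathfrak{t}(L/K)$ only in $\{(\Da_0)_L,w\}$ and meets $\mathfrak{s}_L(\Ha)$ only at $Q$. For any vertex $v$ of $\mathfrak{t}(L/K)$ the median of $v$, $(\Da_0)_L$ and $Q$ lies in $\{(\Da_0)_L,w\}$, which forces $w\in[v,Q]$; since $[v,Q]$ meets $\mathfrak{s}_L(\Ha)$ only at $Q$, the nearest-point projection of $v$ onto $\mathfrak{s}_L(\Ha)$ is $Q$, and therefore
\[
d\big(v,\mathfrak{s}_L(\Ha)\big)=d(v,Q)=d(v,w)+d(w,Q)=d(v,w)+(\delta-1)
\]
for every vertex $v$ of $\mathfrak{t}(L/K)$. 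This is minimised, uniquely, at $v=w$, giving both conclusions: the nearest vertex is $w$, the midpoint of $e$, and $d_w=\delta-1$, which vanishes precisely when $\delta=1$, that is, precisely when $K$ is non-dyadic.
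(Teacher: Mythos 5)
Your proposal is correct in outline but follows a genuinely different route from the paper. The paper never computes a distance at all: it argues by symmetry under conjugation by the uniformizer $\mathbf{u}=\pi_L\mathbf{u}'$. That inner automorphism fixes every vertex of $\mathfrak{s}_L(\Ha)$ (because $\mathbf{u}'$ is a unit of $\Ha$), yet its determinant has odd valuation, so it fixes no vertex of $\mathfrak{t}(K)$ and must swap the endpoints of $e$; hence the only fixed point of $\mathrm{real}\big(\mathfrak{t}(L/K)\big)$ is the midpoint $w$, which is therefore the point of $\mathfrak{t}(L/K)$ closest to the invariant line. The dichotomy is then read off from $\mathbf{u}'$ modulo $\pi_L$: in the non-dyadic case $\mathbf{u}'$ has eigenvalues $\pm1$, generates $\Ha$, and is shown to lie in $w$, so $d_w=0$; in the dyadic case $\tr(\mathbf{u}')$ is a non-unit, so $\mathbf{u}'\in\Ha^{[1]}$, and $w\in\mathfrak{s}_L(\Ha)$ would force every neighbour of $w$ --- including the two that lie in $\mathfrak{t}(L/K)$ and come from $\mathfrak{t}(K)$ --- to be fixed by the conjugation, a contradiction. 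Your argument replaces this by an explicit lattice computation organized around $\delta=v_L(\lambda-\lambda')=v_K(\mathfrak{d}_{L/K})$, and in exchange it buys a sharper conclusion, namely $d_w=v_K(\mathfrak{d}_{L/K})-1$, which the paper does not record; your closing median/convexity argument for uniqueness of the nearest vertex is also fine, though the paper obtains uniqueness for free from the fixed-point argument.

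The one step you defer is not a step that would fail, but you should actually carry it out. Taking $\mathbf{u}$ to be the companion matrix of its Eisenstein polynomial, one checks directly that $\Lambda^{(1)}=\oink_L(1,0)+\pi_L\oink_L^2$ is the midpoint of the subdivided edge; that $(\Da_0)_L\cap\lu_L=\oink_L[\mathbf{u}]=\oink_L\mathbf{1}+\pi_L^{\delta}\Ha$ (the first row of $x\mathbf{1}+y\mathbf{u}$ forces $x,y\in\oink_L$), so that $(\Da_0)_L$ lies at distance exactly $\delta$ from the line rather than merely at distance at most $\delta$; and that the projection point is the $\sigma$-fixed vertex $\Lambda^{(\delta)}$. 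With those verifications written out, your proof is complete.
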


\begin{proof}
    Again, $\oink_{\lu}$ is contained in two 
    maximal orders sharing an edge by \ref{dia3}.
    Here the matrix $\mathbf{u}$ satisfying 
    $\oink_K[\mathbf{u}]=
    \oink_{\lu}$ can be assumed to be a uniformizing
    parameter of the discrete valuation domain
    $\oink_{\lu}$. We claim that
    conjugation by $\mathbf{u}$ leaves every vertex in the line
    $\mathfrak{s}_L(\Ha)$ invariant, while it permutes
    the two maximal orders in $\alge $
    containing $\oink_{\lu}$,
    and therefore leaves no point of the realization of
    $\mathfrak{t}(L/K)$ invariant, except for the midpoint 
    $\Da'$ of $e$, which is a vertex of 
    $\mathfrak{t}(L/K)$, and therefore a maximal order
    in $\alge_L$. Conjugation by $\mathbf{u}$ 
    leaves the point of $\mathfrak{t}(L/K)$ that is
    closest to $\mathfrak{s}_L(\Ha)$ invariant, and therefore
    this point must be the vertex $\Da'$. 
    
    Next, we prove the claim. Conjugation by $\mathbf{u}$ must
    preserve the branch $\mathfrak{s}_K(\mathbf{u})$
    which coincide with the edge $e$. Furthermore, the normalizer
    of a maximal order $\Da$ is $K^*\Da^*$, which contains
    only elements whose determinant has even valuation.
    Since the determinant of the uniformizer $\mathbf{u}$
    has odd valuation, conjugation by $\mathbf{u}$ leaves
    no vertex of $\mathfrak{t}(K)$ invariant. In particular,
    it must permute the endpoints of $e$.
    On the other hand, in $\alge_L$, 
    the matrix $\mathbf{u}$
    can be factored as $\mathbf{u}=\pi_L\mathbf{u}'$, 
    where $\pi_L$ is a
    uniformizer, and $\mathbf{u}'$ is a unit in $\Ha$. It follows
    that conjugation by $\mathbf{u}$ leaves invariant every 
    maximal order containing $\Ha$.

    Now we prove the final statement. For a non-dyadic field $K$
    we can assume that $\pi_L=\sqrt{\pi_K}$ is an eigenvalue
    of $\mathbf{u}$, so the other is $-\sqrt{\pi_K}$, and 
    $\mathbf{u}'$ has eigenvalues $\pm1$. It follows that 
    the unit $\mathbf{u}'$ does not belong to the order
    $\mathcal{O}_L\mathbf{1}+
    \pi_L\Ha$, and therefore it is a generator of $\Ha$.
    Since conjugation by $\mathbf{u}'$ leaves $\Da'$
    invariant, then $\mathbf{u}'\in K^*(\Da')^*$, whence
    $\mathbf{u}'$ belongs to $\Da'$ as its determinant is a 
    unit. This implies that $\Da'$ belongs to
    $\mathfrak{s}_L(\mathbf{u}')=\mathfrak{s}_L(\Ha)$.
    Assume now that $K$ is dyadic. Then the trace 
    $\tr(\mathbf{u})\in K$ is not a unit, and therefore
    it must be divisible by $\pi_K$. It follows that
    $\tr(\mathbf{u}')$ is not a unit, whence
    its eigenvalues coincide over the residue field.
    This means that $\mathbf{u}'$ belongs to 
    $\Ha^{[1]}=\mathcal{O}_L\mathbf{1}+\pi_L\Ha$.
    If $\Da'$ is in the line $\mathfrak{s}_L(\Ha)$,
    then any neighbor belongs to $\mathfrak{s}_L(\Ha)^{[1]}
    =\mathfrak{s}_L\left(\Ha^{[1]}\right)\subseteq
    \mathfrak{s}_L(\mathbf{u}')$, and therefore
    it is invariant under conjugation by $\mathbf{u}$.
    Since $\Da'$ has neighbors in 
    $\mathfrak{t}(L/K)$ corresponding
    to vertices of $\mathfrak{t}(K)$,
    this cannot be the case. We conclude that 
    $\Da'$ is not in the line $\mathfrak{s}_L(\Ha)$.
\end{proof}

\begin{proposition}\label{p27}
    Assume $\lu\subseteq\alge $
    is isomorphic to an
    inseparable extension $L$ of $K$.
    Let $\lu_L=L\lu\cong L[x]/(x^2)$.
    Let $z$ be the visual limit of the branch of
    any non-trivial order in $\lu_L$. 
    Then the vertex of $\mathfrak{t}(L/K)$
    that is closest to $z$ is the midpoint of the edge
     connecting the two maximal orders containing 
    $\oink_{\lu}$. 
\end{proposition}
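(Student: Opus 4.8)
The argument is planned to mirror the proof of Proposition~\ref{p26}, with conjugation by a suitable element $\mathbf{u}$ playing the role the uniformizer of $\oink_{\lu}$ plays there. First I would record what inseparability forces: since $\lu\cong L$ with $[L:K]=2$ and $L/K$ purely inseparable we have $\mathrm{char}\,K=2$, and since $K$ has finite (hence perfect) residue field, a purely inseparable $L/K$ of degree $2$ must be totally ramified, so $e(L/K)=2$. By \ref{dia3} the ring $\oink_{\lu}$ is contained in exactly two maximal orders $\Da_1,\Da_2$ of $\alge$, which are neighbours; let $e$ be the edge they span and $\Da'$ its midpoint, which is a vertex of $\mathfrak{t}(L/K)$ because $e(L/K)=2$ (as already noted in the proof of Proposition~\ref{p26}). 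Choose $\mathbf{u}\in\alge$ with $\oink_K[\mathbf{u}]=\oink_{\lu}$, e.g. the image of a uniformizer of the totally ramified local ring $\oink_{\lu}$ (compare \ref{dia3}). Its minimal polynomial over $K$ — being that of a uniformizer of the totally ramified, inseparable extension $L/K$ — is Eisenstein with vanishing linear term, so it equals $x^2-\pi_K$ for a uniformizer $\pi_K$ of $K$; hence $\mathbf{u}^2=\pi_K\mathbf{1}$, and $\pi_L:=\sqrt{\pi_K}\in L$ (a square root exists since $L\cong K[x]/(x^2-\pi_K)$) is a uniformizer of $L$. In $\alge_L$ one then has $(\mathbf{u}-\pi_L\mathbf{1})^2=\mathbf{u}^2-\pi_L^2\mathbf{1}=0$ (using $\mathrm{char}\,K=2$), so $n:=\mathbf{u}-\pi_L\mathbf{1}\neq0$ is nilpotent and $\lu_L=L[\mathbf{u}]=L\mathbf{1}\oplus Ln$.

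Next I would pin down $z$. Every non-trivial $\oink_L$-order of $\lu_L$ equals $\oink_L[\lambda n]$ for some $\lambda\in L^*$ (adding a scalar to a generator changes nothing), so it is generated by the nilpotent $\lambda n$, and by \ref{dia1} its branch is an infinite foliage, which has a single visual limit (Fig.~\ref{f1}.\textbf{I}). A short lattice computation shows that this branch contains a geodesic ray converging to the line $\ker n\in\mathbb{P}^1(L)$, so the common visual limit is $z=[\ker n]$, the eigenline of $\mathbf{u}$, and in particular it does not depend on $\lambda$. Since the characteristic polynomial $x^2-\pi_K$ of $\mathbf{u}$ is irreducible over $K$ (else $\sqrt{\pi_K}\in K$ and $L=K$), the matrix $\mathbf{u}$ has no eigenline defined over $K$, i.e. $z\notin\mathbb{P}^1(K)$. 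As the visual limits of $\mathfrak{t}(L/K)$ are precisely $\mathbb{P}^1(K)$, the geodesic ray towards $z$ eventually leaves the connected subgraph $\mathfrak{t}(L/K)$ of $\mathfrak{t}(L)$, so there is a well-defined vertex of $\mathfrak{t}(L/K)$ closest to $z$.

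The key device is the involution $\tau$ of $\alge$, and of $\alge_L$, given by conjugation by $\mathbf{u}$. Since $\mathbf{u}^2=\pi_K\mathbf{1}$ is central, $\tau^2=\mathrm{id}$; $\tau$ acts on $\mathfrak{t}(K)$ and on $\mathfrak{t}(L)$ and preserves $\mathfrak{t}(L/K)$. Because $\lu$, hence $\lu_L$, is commutative, $\tau$ fixes $\lu_L$ pointwise, in particular it fixes $n$, hence the line $\ker n$, hence $z$. On the other hand $\tau$ preserves the branch $\mathfrak{s}_K(\mathbf{u})$, which by \ref{dia3} coincides with the edge $e$, but fixes no vertex of $\mathfrak{t}(K)$: a maximal order $\Da$ fixed by $\tau$ would be normalized by $\mathbf{u}$, forcing $\mathbf{u}\in K^*\Da^*$, whose elements have determinant of even valuation, whereas $(\det\mathbf{u})^2=\pi_K^2$ makes $v_K(\det\mathbf{u})$ odd. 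Hence $\tau$ interchanges $\Da_1$ and $\Da_2$; and since an involution of a tree that fixes no vertex has a unique fixed point — the midpoint of one edge — the unique fixed point of $\tau$ in $\mathrm{real}(\mathfrak{t}(L/K))=\mathrm{real}(\mathfrak{t}(K))$ is $\Da'$. This is essentially the computation already made in the proof of Proposition~\ref{p26}, applied now to $\mathbf{u}$ itself.

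Finally, combining the two: $\tau$ fixes both $\Da'$ and $z$, so it maps the unique geodesic ray $\rho=[\Da',z)$ of $\mathfrak{t}(L)$ to itself and fixes its initial vertex, hence fixes $\rho$ pointwise; therefore every point of $\rho$ lying in $\mathfrak{t}(L/K)$ is a fixed point of $\tau$ restricted to $\mathfrak{t}(L/K)$, and so $\rho$ meets $\mathfrak{t}(L/K)$ only in $\Da'$. A connected subgraph of a tree is convex, so $[v,\Da']\subseteq\mathfrak{t}(L/K)$ for every vertex $v\in\mathfrak{t}(L/K)$; since $[v,z)$ and $\rho$ share a terminal subray, it follows that $\Da'\in[v,z)$, that $[v,z)$ stays inside $\mathfrak{t}(L/K)$ up to $\Da'$, and that it leaves at once afterwards. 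Thus $\Da'$ is the vertex of $\mathfrak{t}(L/K)$ closest to $z$, which is what was to be shown. The step needing real care is the middle one — making the identification $z=[\ker n]$ rigorous, checking independence of the chosen non-trivial order, and confirming that the midpoint of $e$ really is a vertex of $\mathfrak{t}(L/K)$; once $z$ is located the involution argument is short and parallels Proposition~\ref{p26}.
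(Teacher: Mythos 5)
Your proposal is correct and follows essentially the same route as the paper: both hinge on conjugation by the uniformizer $\mathbf{u}$ of $\oink_{\lu}$, which fixes the visual limit of the nilpotent part of $\lu_L$ while fixing no vertex of $\mathfrak{t}(K)$ and hence only the midpoint of the edge $e$ inside $\mathfrak{t}(L/K)$. The only difference is cosmetic: the paper factors $\mathbf{u}=\pi_L\mathbf{u}'$ and phrases the conclusion in terms of the branch of the unit $\mathbf{u}'$, whereas you identify $z=[\ker n]$ directly and close with an explicit ray-and-convexity argument, which if anything makes the last step more rigorous than the paper's terse ``the result follows.''
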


\begin{proof}
    Note that in this case $K$ has characteristic $2$, 
    and therefore it is dyadic. The extension $L/K$
    is ramified, and therefore $\oink_{\lu}$ is
    contained precisely in the endpoints of an edge by
    \ref{dia3}. Again we set $\oink_{\lu}=
    \mathcal{O}_K[\mathbf{u}]$, where $\mathbf{u}$
    corresponds to a uniformizer of $L$.
    In this case $\lu_L=L\lu\cong 
    L[x]/(x^2)$ consists only on matrices 
    of the form $\mathbf{p}=
    a\mathbf{1}+b\mathbf{e}$, where $a,b\in K$ and
    $\mathbf{e}$ is a fixed nilpotent element.
    Then $\mathbf{p}$ is integral precisely when
    $a$ is.
    In particular, all orders spanning 
    this algebra 
    have the form 
    $\mathcal{O}_L\mathbf{1}+
    \pi_L^r\mathcal{O}_L\mathbf{e}$,
    for some integer $r\in\mathbb{Z}$. The
    branches of these orders have the 
    shape described
    in \ref{dia1}, and any two of them, say $\Ha$ and $\Ha'$,
    are connected by a relation of the form $\Ha^{[t]}=\Ha'$,
    up to permutation, so in particular they have all the 
    same visual limit.
    In this case $\oink_{\lu_L}$ is not 
    defined, since $\lu_L$ has no maximal order. 
    However, we can still 
    write $\mathbf{u}=\pi_L\mathbf{u}'$, as above, 
    where $\mathbf{u}'\in\lu_L$
    generates an order $\Ha=\mathcal{O}_L[\mathbf{u}']$, 
    and it is a unit. In particular, conjugation by
    $\mathbf{u}'$ (or, equivalently, by $\mathbf{u}$) leaves
    invariant every order containing $\mathbf{u}'$.
    We prove as before that conjugation by $\mathbf{u}$ does
    not leave any vertex of $\mathfrak{t}(K)$ 
    invariant.
    In particular, the vertex $\Da'$ of
    $\mathfrak{t}(L/K)$ corresponding to the 
    midpoint
    of the edge $\mathfrak{s}_K(\mathcal{O}_{\lu})
    \subseteq\mathfrak{t}(K)$ must be a leaf of
    $\mathfrak{s}_L(\mathbf{u}')$. The result 
    follows.

\end{proof}

Although invariant visual limits are 
those with representatives in the sub-tree
$\mathfrak{t}(L/K)$, the reader must be warned
that the analogous statement on vertices is false 
in general. For instance, the maximal
orders in the smallest path joining 
$\mathfrak{s}_L(\Ha)$ and
$\mathfrak{t}(L/K)$, in the dyadic case 
of Prop. \ref{p26}, are Galois invariant.

\section{On the classification of Bass Orders}

\begin{proposition}\label{p41n}
Let $\Ba$ be a Bass order. Then precisely one of the following
conditions holds:
\begin{enumerate}[label=($\mathfrak{E}$\bf{\arabic*})]
    \item\label{Ea1} $\Ba$ is contained in a unique maximal 
    order $\Da$, and it has the form 
    $\Ba=\mathcal{O}_{\lu}+\pi_K^r\Da$,
    for some integer $r\geq0$, and for some two-dimensional 
    subalgebra $\lu\subseteq\alge$ that is
    isomorphic to an unramified quadratic extension $L/K$
    and satisfies $\mathcal{O}_{\lu}\subseteq\Da$.
    \item\label{Ea2} $\Ba$ is contained in two maximal orders 
    $\Da_1$ and $\Da_2$ sharing an edge, and it has the form 
    $\Ba=\mathcal{O}_{\lu}+
    \boldsymbol\pi_{\lu}^r(\Da_1\cap\Da_2)$, 
    for some algebra $\lu\subseteq\alge$ isomorphic 
    to a ramified quadratic extension $L/K$ satisfying
    $\mathcal{O}_{\lu}\subseteq(\Da_1\cap\Da_2)$
    and for some 
    uniformizer $\boldsymbol\pi_{\lu}$.
    \item \label{Ea3} $\Ba$ is an Eichler order 
    of level $d\geq 2$.
\end{enumerate} 
Furthermore, any full ideal $\Ba\subseteq\alge$ satisfying one
of these conditions is a Bass order.
\end{proposition}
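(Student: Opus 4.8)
The plan is to exploit the description of Bass orders as exactly the full orders whose branch $\mathfrak{s}_K(\Ba)$ is a line (Theorem \ref{t11}), together with the normal form (\ref{gor}) for the Gorenstein closure. First I would split a Bass order $\Ba$ into its Gorenstein closure $\tilde\Ba$ and an ideal: $\Ba = \oink_K\mathbf{1}+\pi_K^s\tilde\Ba$. Since any order containing $\Ba$ is Gorenstein, in particular $\tilde\Ba$ is Gorenstein and is itself Bass (any supra-order of $\tilde\Ba$ is a supra-order of $\Ba$). So the argument reduces to two tasks: (a) classify the \emph{Gorenstein} Bass orders, and (b) check that applying $\Ba\mapsto\oink_K\mathbf{1}+\pi_K^s\Ba$ keeps us inside the list. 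For (b), I observe that each of \ref{Ea1}, \ref{Ea2} is already stable under this operation: replacing $r$ by $r+s$ in \ref{Ea1} and noting $\boldsymbol\pi_\lu^{2s}(\Da_1\cap\Da_2)=\pi_K^s(\Da_1\cap\Da_2)$ type adjustments handle \ref{Ea2} (here I would use that the conductor ideal of $\oink_\lu$ over $\oink_K$ in the ramified case is $\boldsymbol\pi_\lu^{2t}$); while for \ref{Ea3} the orders $\oink_K\mathbf 1 + \pi_K^s\Ea$ with $\Ea$ Eichler of level $d$ are exactly the case-\ref{Ea1}/\ref{Ea2}/\ref{Ea3} orders with stem a longer line — more precisely, $\Ea^{[s]}$ has branch a tubular neighborhood of positive width, hence is not Bass unless $s=0$, so actually \ref{Ea3} is automatically Gorenstein and stable.

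For task (a), the key input is Theorem \ref{t11}: $\Ba$ Gorenstein Bass means $\mathfrak{s}_K(\Ba)$ is a line $\mathfrak{p}$, which is a single vertex, an edge, a finite line of length $\geq 2$, a ray, or a maximal line. I would go through these cases. If $\mathfrak{p}$ is a finite line with at least two edges, then $\Ba=\Ha_{\mathfrak{p}}$ is an Eichler order of level $d\geq 2$, giving \ref{Ea3}. If $\mathfrak{p}$ is a single vertex $\Da$, then $\Ba$ is a rank-$4$ order contained in the unique maximal order $\Da$ with $\mathfrak{s}_K(\Ba)=\{\Da\}$; writing $\Ba$ in terms of generators and using the branch computations \ref{dia1}--\ref{dia3}, the only way a suborder of $\Da$ generated together with $\Da$-data can have a single-vertex branch is to contain $\oink_\lu$ for $\lu$ an unramified quadratic field (case \ref{dia3}, last alternative) — here I would argue that $\Ba = \oink_\lu + \pi_K^r\Da$ by comparing conductors, i.e. $\Ba$ is sandwiched between $\oink_\lu\mathbf 1$-type data and $\Da$, and being Gorenstein forces $r$ with the stated shape; the unramified-field factor is what pins down \ref{Ea1}. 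Symmetrically, if $\mathfrak{p}$ is a single edge joining $\Da_1,\Da_2$, then $\Ba$ contains $\oink_\lu$ for a \emph{ramified} quadratic $\lu$ (case \ref{dia3}, ramified alternative), and using Proposition \ref{p23} plus the fact that the branch of $\oink_\lu$ is exactly the edge, I would identify $\Ba=\oink_\lu+\boldsymbol\pi_\lu^r(\Da_1\cap\Da_2)$ for a uniformizer $\boldsymbol\pi_\lu$, giving \ref{Ea2}. The ray and maximal-line cases need to be excluded for Gorenstein orders: by \ref{dia1} an infinite foliage arises from a nilpotent generator, but a \emph{full} order cannot have branch an infinite foliage (Proposition \ref{p23} gives a tubular neighborhood for full orders), and a ray or full line as branch of a full order means the order is Eichler of finite level — so these collapse back to \ref{Ea3} or, if the "width" is positive, are excluded by Gorenstein-ness.

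Conversely, for the "Furthermore" clause I would verify directly that each listed order is Bass by checking its branch is a line. For \ref{Ea3} this is the content of the Eichler-order example. For \ref{Ea1}: $\mathfrak{s}_K(\Ba)=\mathfrak{s}_K(\oink_\lu)\cap\mathfrak{s}_K(\Da^{[r]}\text{-type generators})$; since $\mathfrak{s}_K(\oink_\lu)$ is a single vertex $\{\Da\}$ in the unramified-field case by \ref{dia3}, and $\Da\in\mathfrak{s}_K(\Ba)$, the branch is exactly $\{\Da\}$, a (degenerate) line. For \ref{Ea2}: $\mathfrak{s}_K(\oink_\lu)$ is the edge $e$ from $\Da_1$ to $\Da_2$ by \ref{dia3} in the ramified case, and the extra generators $\boldsymbol\pi_\lu^r(\Da_1\cap\Da_2)$ only add relations already satisfied on all of $e$; hence $\mathfrak{s}_K(\Ba)=e$, again a line. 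Then Theorem \ref{t11} gives that $\Ba$ is Bass, completing the proof. The main obstacle I anticipate is task (a), case $\mathfrak{p}=\{\Da\}$ or $\mathfrak{p}=e$: one must show that a Gorenstein full order with that branch \emph{must} contain the ring of integers of a suitable (un)ramified quadratic field and has exactly the stated conductor — this requires a careful argument with the normal form (\ref{gor}) and the rank-two subalgebra structure $\lu=K[\mathbf u]$, ruling out the $K\times K$ and $K[x]/(x^2)$ alternatives for the "defining" subalgebra by a branch-shape argument via \ref{dia1}--\ref{dia3}.
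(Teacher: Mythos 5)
Your overall strategy is genuinely different from the paper's: you propose to derive the classification intrinsically from Theorem \ref{t11} (branch is a line) by case analysis on the shape of the line, whereas the paper simply imports Brzezinski's explicit normal forms for non-maximal Bass orders \cite[Prop.~5.4]{Brz83} and verifies by direct matrix computation that each of $\Ea_n$, $\Ba_{n,\epsilon}$, $\Ba'_{n,p,\alpha_1\alpha_2}$ falls into one of \ref{Ea1}--\ref{Ea3}. Your verification of the ``Furthermore'' clause and your treatment of the Eichler case \ref{Ea3} match the paper's. But the forward direction of your argument has a genuine gap at exactly the point you flag as ``the main obstacle'': you must show that a Bass order whose branch is a single vertex (resp.\ a single edge) actually \emph{contains} the full ring of integers $\oink_\lu$ of an unramified (resp.\ ramified) quadratic subfield, with the precise conductor shape $\oink_\lu+\pi_K^r\Da$ (resp.\ $\oink_\lu+\boldsymbol\pi_\lu^r\Ea$). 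The justification you sketch --- ``the only way a suborder of $\Da$ \ldots can have a single-vertex branch is to contain $\oink_\lu$'' --- does not hold as stated: by (\ref{gens}) the branch of $\Ba$ is the \emph{intersection} of the branches of its generators, and an intersection of several tubular neighborhoods of distinct lines can easily be a single vertex without any individual generator having a single-vertex branch. So nothing in \ref{dia1}--\ref{dia3} forces a generator of type ``unramified field''; this is precisely the nontrivial algebraic content that the paper gets for free from \cite{Brz83}, and in your route it would have to be supplied by a separate argument (e.g.\ reducing $\Ba$ modulo $\pi_K\Da$, showing its image in $\matrici_2(\oink_K/\pi_K\oink_K)$ must contain a copy of the quadratic residue extension because all other proper subalgebras would place $\Ba$ inside a second maximal order, and then lifting; the edge case is harder still). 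Once such containment is established, Lemma \ref{l43n} does give the stated normal form, but that lemma is not cited in your sketch and the containment itself is the missing idea.

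Two smaller points. First, your step (b) is vacuous: a Bass order is in particular Gorenstein (it is a supra-order of itself), so $s=0$ always; moreover the ``stability'' you assert is false for $s\geq1$, since $\oink_K\mathbf{1}+\pi_K^s\oink_\lu$ is a non-maximal order in $\lu$ rather than $\oink_{\lu'}$ for another quadratic algebra --- harmless here only because $s=0$. Second, the ray and maximal-line cases are excluded for full orders directly by Proposition \ref{p22} (the stem of the branch of a full order is a finite line), not by a Gorenstein-ness argument.
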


\begin{proof}
    The last statement is immediate for Eichler orders.
    In the other two cases the branch is either a vertex or 
    an edge, so the result follows from Thm. \ref{t11}.
    To prove that Bass orders satisfy one of the conditions
    \ref{Ea1}-\ref{Ea3}, we use the classification in
    \cite[Prop. 5.4]{Brz83}. Let $\mathbf{x}_{ij}$ be
    the matrix with a coordinate $1$ in the intersection of 
    the $i$-th row and the $j$-th column, and $0$ elsewhere. 
    then every non-maximal 
    Bass order is conjugate to one of the following:
    \begin{enumerate}[label=($\mathfrak{J}$\bf{\arabic*})]
    \item\label{Ja1} The order $\Ea_n$ generated by
    $\mathbf{1}$, $\mathbf{x}_{11}$, $\mathbf{x}_{12}$
    and $\pi_K^n\mathbf{x}_{21}$, for some integer
    $r\geq1$.
    \item\label{Ja2} The order $\Ba_{n,\epsilon}$ generated by
    $\mathbf{1}$, $\mathbf{x}_{11}+
    \mathbf{x}_{12}-\epsilon\mathbf{x}_{21}$, 
    $\pi_K^n\mathbf{x}_{11}$
    and $\pi_K^n\mathbf{x}_{12}$, for some integer
    $n\geq1$, and some unit $\epsilon$ for which
    the polinomial $t^2-t+\epsilon$ 
    is irreducible.
    \item \label{Ja3} The order 
    $\Ba'_{n,p,\alpha_1\alpha_2}$ generated by
    $\mathbf{1}$, $\alpha_1\mathbf{x}_{11}+
    \alpha_2\mathbf{x}_{12}-
    \pi_K\mathbf{x}_{21}$, 
    $\pi_K^n\mathbf{x}_{11}$
    and $\pi_K^{n+p}\mathbf{x}_{21}$, 
    for some unit 
    $\alpha_2$, some element $\alpha_1$
    with positive valuation and some 
    integers $n\geq1$,
    $p\in\{0,1\}$.
\end{enumerate} 
    We need to prove the statement case 
    by case:
    \begin{itemize}
    \item The order
    $\Ea_n$ in case \ref{Ja1} is Eichler, since
    it is the intersection of the two 
    maximal orders $\Da_0=
    \bmattrix{\oink_K}{\oink_K}{\oink_K}{\oink_K}$
    and $\Da_n=
    \bmattrix{\oink_K}{\pi_K^{-n}\oink_K}
    {\pi_K^n\oink_K}{\oink_K}$. This 
    means we are in
    case \ref{Ea3}, unless $n\leq1$, 
    and in that case
    we can simply take $r=0$ in case 
    \ref{Ea1} or \ref{Ea2}.
    \item
    Case \ref{Ja2} falls into case \ref{Ea1}.
    In fact, the matrix $\mathbf{u}=
    \mathbf{x}_{11}+\mathbf{x}_{12}-
    \epsilon\mathbf{x}_{21}$ generates 
    the ring of integers
    in an algebra $\lu\subseteq\alge$ that is 
    isomorphic to an unramified extension, and 
    therefore
    it is contained in a unique maximal order, 
    which must be
    $\Da_0$, since $\mathbf{u}$ has integral 
    coefficients. Since $\pi_K^n\mathbf{x}_{22}=
    \pi_K^n\mathbf{1}-\pi_K^n\mathbf{x}_{11}$ and
    $\pi_K^n\mathbf{x}_{12}=
    \pi_K^n\mathbf{u}-\pi_K^n\mathbf{x}_{11}
    +\pi_K^n\epsilon\mathbf{x}_{21}$, we conclude
    that $\pi_K^n\Da_0\subseteq\Ba_{n,\epsilon}$,
    whence the result follows.
    \item
    In case \ref{Ja3}, the matrix 
    $\mathbf{p}=
    \alpha_1\mathbf{x}_{11}+
    \alpha_2\mathbf{x}_{12}-
    \pi_K\mathbf{x}_{21}$ 
    has the minimal polynomial
    $t^2-\alpha_1t+\pi_K\alpha_2$, 
    which is an Eisenstein 
    polynomial, and therefore it generates 
    the ring of integers
    in an algebra $\lu\subseteq\alge$ that is 
    isomorphic to an unramified extension $L/K$. 
    In particular, it is contained precisely in 
    two maximal orders sharing an edge.
    Since it is contained in $\Da_0$ and $\Da_1$,
    these are the unique two maximal orders containing it.
    Now we note that $\frac{\mathbf{p}^2}{\pi_K}=
    \frac{\alpha_1}{\pi_K}\mathbf{p}-\alpha_2\mathbf{1}$
    belongs to $\Ea=\Da_0\cap\Da_1$, and has eigenvalues that
    are units in $L$. It follows that $\frac{\mathbf{p}^2}
    {\pi_K}$ is a unit in $\Ea$, and therefore 
    $\mathbf{p}^2\Ea=\pi_K\Ea$. Since $\Ea$ is spanned by
    $\mathbf{1}$, $\mathbf{p}$, $\mathbf{x}_{11}$
    and $\pi_K\mathbf{x}_{21}$, this proves that
    $\oink_{\lu}+\mathbf{p}^{2r}\Ea=
    \Ba'_{r,1,\alpha_1,\alpha_2}$ for $r\geq0$.
    On the other hand, since
    $\mathbf{p}^2=
    \alpha_1\mathbf{p}-\pi_K\alpha_2\mathbf{1}$,
    we have that
    $$\mathbf{p}\Ea=
    \langle\mathbf{p},\mathbf{p}^2,
    \mathbf{p}\mathbf{x}_{11},
    \pi_K\mathbf{p}\mathbf{x}_{21}
    \rangle_{\oink_K}=
    \langle\mathbf{p},\pi_K\alpha_2\mathbf{1},
    \mathbf{p}\mathbf{x}_{11},
    \pi_K\mathbf{p}\mathbf{x}_{21}
    \rangle_{\oink_K}.$$
    Since $\alpha_2$ is a unit and we have 
    $\mathbf{x}_{12}=\alpha_2^{-1}
    \left(\mathbf{p}-\mathbf{p}
    \mathbf{x}_{11}\right)$, the above identity gives
    $$\mathbf{p}\Ea=\langle\mathbf{p},
    \pi_K\mathbf{1},\mathbf{x}_{12},
    \pi_K\mathbf{p}\mathbf{x}_{21}
    \rangle_{\oink_K}.$$
    Finally, the identities
    $\mathbf{p}\mathbf{x}_{21}=
    \alpha_2\mathbf{x}_{11}$ and
    $\pi_K\mathbf{x}_{21}=\alpha_2\mathbf{x}_{12}+
    \alpha_1\mathbf{x}_{11}-\mathbf{p}$
    give
    $$\mathbf{p}\Ea=
    \langle\mathbf{p},\pi_K\mathbf{1},
    \mathbf{x}_{12},
    \pi_K\mathbf{x}_{11}\rangle_{\oink_K}
    =\langle\mathbf{p},
    \pi_K\mathbf{1},\pi_K\mathbf{x}_{21},
    \pi_K\mathbf{x}_{11}
    \rangle_{\oink_K}.$$
    We conclude that $$\oink_{\lu}+
    \mathbf{p}^{2r+1}\Ea=
    \langle\mathbf{p},
    \mathbf{1}\rangle_{\oink_K}+
    \pi_K^r\mathbf{p}\Ea=
    \Ba'_{r+1,0,\alpha_1,\alpha_2},$$
    for $r\geq0$.
    Since $\Ba'_{0,0,\alpha_1,\alpha_2}=\Da_0$,
    the result follows.
    \end{itemize}
\end{proof}

\begin{example}
Assume $K$ is a non-dyadic local field.
Consider the tree in Fig. \ref{nbh1}, where
the isomorphism classes of a Bass order
$\Ba$ correspond to a vertex $v_{\Ba}$, 
and an edge connects it to $v_{\Ba'}$
for any order $\Ba'$ isomorphic
to a maximal suborder of $\Ba$, 
see \cite[Fig. 1]{Lem}.
Vertices at the same level correspond
to Bass orders with the same discriminant. 
      \begin{figure}[h!]
\begin{center}
\begin{tikzpicture}[scale=1]
\filldraw (1.5,2.63) circle(1.5pt);
\draw (1.5,2.63) -- ++(210:1.8);
\draw (1.5,2.63) -- ++(330:1.8);
\filldraw (0,1.73) circle(1.5pt);
\draw (0,1.73) -- ++(270:2);
\filldraw (0,0.73) circle(1.5pt);
\filldraw (0,-.27) circle(1.5pt)node[anchor=north] {$\vdots$};
\filldraw (3.05,1.73) circle(1.5pt);
\draw (3.05,1.73) -- ++(270:2);
\filldraw (3.05,1.23) circle(1.5pt);
\filldraw (3.05,.73) circle(1.5pt);
\filldraw (3.05,.23) circle(1.5pt);
\filldraw (3.05,-.27) circle(1.5pt)node[anchor=north] {$\vdots$};
\filldraw (2.26,2.18) 
circle(1.5pt)node[anchor=south] {${}_u$};
\draw (2.26,2.16) -- ++(210:.9);
\filldraw (1.5,1.73) 
circle(1.5pt)node[anchor=south] {${}_w$};
\draw (1.5,1.73) -- ++(240:.55);
\filldraw (1.21,1.23) circle(1.5pt);
\draw (1.21,1.23) -- ++(270:1.5);
\filldraw (1.21,.73) circle(1.5pt);
\filldraw (1.21,.23) circle(1.5pt);
\filldraw (1.21,-.27) circle(1.5pt)node[anchor=north] {$\vdots$};
\draw (1.5,1.73) -- ++(300:.55);
\filldraw (1.79,1.23) circle(1.5pt);
\draw (1.79,1.23) -- ++(270:1.5);
\filldraw (1.79,.73) circle(1.5pt);
\filldraw (1.79,.23) circle(1.5pt);
\filldraw (1.79,-.27) circle(1.5pt)node[anchor=north] {$\vdots$};
\end{tikzpicture}
\end{center}
\caption{The tree of isomorphism classes of Bass orders in $\alge $ when 
$K$ is non-dyadic.}\label{nbh1}
\end{figure}
The vertical line on the left corresponds to
non-maximal Bass orders containing
an unramified extension (Case \ref{Ea1}).
The line on the right corresponds to
Eichler orders of level $2$ or greater
(Case \ref{Ea3}).
The two central vertical lines correspond
to Bass orders in case \ref{Ea2}
containing either of
the ramified quadratic extensions.
The vertex marked $u$ corresponds to
an Eichler order of level one.
The vertex marked $w$ corresponds to
a non-Eichler Bass order containing copies
of either ramified extension.
The latter is also in case \ref{Ea2}.
   
\end{example}

\section{Proof of the main result}

\begin{lemma}\label{l43n}
Let $\mathcal{O}_\lu$ be the ring of 
integers in a subalgebra $\lu\cong L$
of $\alge=\alge $, where $L/K$ is a 
quadratic extension of local fields.
Let $\Ba$ be a full order containing 
$\mathcal{O}_\lu$. Let
$\boldsymbol{\pi}_\lu$ be a uniformizer
of $\mathcal{O}_\lu$.
Then any full suborder $\Ba'$ of $\Ba$ containing 
$\mathcal{O}_\lu$ has the form $\Ba'=
\mathcal{O}_\lu+\boldsymbol{\pi}_\lu^r\Ba$
for some $r\geq0$.
\end{lemma}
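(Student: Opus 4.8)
The plan is to use that $\mathcal{O}_\lu$, being the ring of integers of the local field $\lu\cong L$, is a complete discrete valuation ring, hence a principal ideal domain, and to describe every full order between $\mathcal{O}_\lu$ and $\Ba$ as a free \emph{left} $\mathcal{O}_\lu$-module of rank $2$ in which $\mathbf 1$ may be taken as one of the two basis vectors. More precisely, I would first establish the following normal form: for any full $\oink_K$-order $\mathcal C\subseteq\alge$ with $\mathcal{O}_\lu\subseteq\mathcal C$ there is an element $e_{\mathcal C}\in\mathcal C$ such that $\mathcal C=\mathcal{O}_\lu\mathbf 1\oplus\mathcal{O}_\lu e_{\mathcal C}$ as a left $\mathcal{O}_\lu$-module. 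Indeed, $\mathcal C$ is finitely generated over $\oink_K$, hence over $\mathcal{O}_\lu$, and it is $\mathcal{O}_\lu$-torsion-free because a nonzero element of the field $\lu$ is invertible in $\alge$; so over the PID $\mathcal{O}_\lu$ it is free, and its $\mathcal{O}_\lu$-rank is $2$ since its $\oink_K$-rank is $4$ while $\mathcal{O}_\lu$ has $\oink_K$-rank $2$. Finally $\mathcal C/\mathcal{O}_\lu\mathbf 1$ is torsion-free: if $\boldsymbol\pi_\lu^{k}x\in\mathcal{O}_\lu$ for some $x\in\mathcal C$ and $k\geq1$ then $x\in\lu$, and being integral over $\oink_K$ forces $x\in\mathcal{O}_\lu$. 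Hence the inclusion $\mathcal{O}_\lu\mathbf 1\hookrightarrow\mathcal C$ splits over $\mathcal{O}_\lu$, and a rank-$1$ complement has the form $\mathcal{O}_\lu e_{\mathcal C}$.

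Then I would apply this to both $\Ba=\mathcal{O}_\lu\mathbf 1\oplus\mathcal{O}_\lu e$ and $\Ba'=\mathcal{O}_\lu\mathbf 1\oplus\mathcal{O}_\lu e'$. Since $\Ba'\subseteq\Ba$ we may write $e'=a\mathbf 1+be$ with $a,b\in\mathcal{O}_\lu$, and necessarily $b\neq0$, for otherwise $\Ba'\subseteq\mathcal{O}_\lu$ would fail to be full. As $\mathcal{O}_\lu$ is a discrete valuation ring, $b=\boldsymbol\pi_\lu^{r}u$ for some integer $r\geq0$ and some unit $u\in\mathcal{O}_\lu^{\times}$. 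A short computation, in which the summand $\mathcal{O}_\lu a\subseteq\mathcal{O}_\lu$ is absorbed and $\mathcal{O}_\lu b=\boldsymbol\pi_\lu^{r}\mathcal{O}_\lu$ is used, gives
\[
\Ba'=\mathcal{O}_\lu\mathbf 1+\mathcal{O}_\lu(a\mathbf 1+be)=\mathcal{O}_\lu+\bigl(\boldsymbol\pi_\lu^{r}\mathcal{O}_\lu\bigr)e ,
\]
while, because $\boldsymbol\pi_\lu$ commutes with every element of $\mathcal{O}_\lu$ and $\boldsymbol\pi_\lu^{r}\mathcal{O}_\lu\subseteq\mathcal{O}_\lu$,
\[
\mathcal{O}_\lu+\boldsymbol\pi_\lu^{r}\Ba=\mathcal{O}_\lu+\boldsymbol\pi_\lu^{r}\bigl(\mathcal{O}_\lu\mathbf 1+\mathcal{O}_\lu e\bigr)=\mathcal{O}_\lu+\bigl(\boldsymbol\pi_\lu^{r}\mathcal{O}_\lu\bigr)e .
\]
Comparing the two right-hand sides yields $\Ba'=\mathcal{O}_\lu+\boldsymbol\pi_\lu^{r}\Ba$, as required.

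The argument is essentially bookkeeping once the module picture is set up, and I do not expect a serious obstacle. The one point that genuinely needs care is that $\mathcal{O}_\lu$ is not central in $\alge$, so all module operations — in particular the meaning of $\boldsymbol\pi_\lu^{r}\Ba$, which must be read as $\{\boldsymbol\pi_\lu^{r}x:x\in\Ba\}$ — have to be interpreted consistently on one side; I use the left action throughout, which is also the convention implicit in the computations in the proof of Proposition \ref{p41n}. One may note in passing that a unit of $\mathcal{O}_\lu$ acts by an inner automorphism of $\Ba$, so the description does not depend on the chosen uniformizer $\boldsymbol\pi_\lu$, and that the right-hand side is automatically an order since it coincides with $\Ba'$.
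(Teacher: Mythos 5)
Your proposal is correct and follows essentially the same route as the paper: both arguments hinge on the observation that $\Ba\cap\lu=\mathcal{O}_\lu$ by integrality, so that $\Ba/\mathcal{O}_\lu\mathbf 1$ is a torsion-free, hence free, rank-one module over the discrete valuation ring $\mathcal{O}_\lu$, after which the intermediate orders are classified by the ideals $\boldsymbol\pi_\lu^{r}\mathcal{O}_\lu$. The only difference is presentational — you work with an explicit $\mathcal{O}_\lu$-basis $\{\mathbf 1,e\}$ where the paper invokes the correspondence theorem for submodules — and your extra care about using the left action consistently is a sound, if minor, addition.
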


\begin{proof}
    Any such order $\Ba'$ is a rank two left
    $\mathcal{O}_\lu$-module. Since
    $\mathcal{O}_\lu$ is a principal ideal domain, the quotient module
    $\Ba/\mathcal{O}_\lu$ has the form
    $\mathcal{O}_\lu\oplus\mathcal{T}$,
    where $\mathcal{T}$ is a torsion module.
    Torsion elements in this quotient corresponds
    to elements in $\lu$. Since every element in $\Ba$ must be integral over 
    $\mathcal{O}_K$, we conclude that
    $\Ba\cap\lu=\oink_{\lu}$, and therefore $\mathcal{T}=
    \{0\}$, i.e., $\Ba/\mathcal{O}_\lu
    \cong\mathcal{O}_\lu$ as modules.
    It follows from the Correspondence Theorem
    for Modules that the submodules of $\Ba$
    containing $\mathcal{O}_\lu$ are
    in correspondence with the ideals of
    $\mathcal{O}_\lu$. In fact,
    the ideal $I=I\mathcal{O}_\lu$
    corresponds to 
    $I(\Ba/\mathcal{O}_\lu)$, and therefore to $\mathcal{O}_\lu+I\Ba$. The result follows if we recall that
    every ideal in $\mathcal{O}_\lu$
    is generated by a power of the uniformizer.
\end{proof}

Next result can be found in 
\cite[Cor. 3.2]{GB1}:

\begin{lemma}\label{c3x}
    If a maximal order $\Da$ is a leaf of the branch $\mathfrak{s}_K(\Ha)$ then the extension
    $\Da_L$ is a leaf of the branch 
    $\mathfrak{s}_L(\Ha_L)$ for any field extension
    $L/K$.   \qed 
\end{lemma}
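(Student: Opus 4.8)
The plan is to translate the statement into the language of lattices and then reduce it to an elementary fact about modules over the residue field. Recall that every maximal order of $\alge$ is $\mathrm{End}_{\oink_K}(\Lambda)$ for an $\oink_K$-lattice $\Lambda$ in $K^2$, unique up to homothety, that $\Da\in\mathfrak{s}_K(\Ha)$ exactly when $\Ha\Lambda\subseteq\Lambda$, and that $\Da_L=\mathrm{End}_{\oink_L}(\Lambda_L)$ for $\Lambda_L=\oink_L\Lambda$ (since $\oink_L$ is flat over $\oink_K$). First I would record the following description of the neighbours of $\Da$ inside the branch: writing $\overline{\Lambda}=\Lambda/\pi_K\Lambda$, a neighbour $\Da'=\mathrm{End}_{\oink_K}(\Lambda')$ of $\Da$ (so $\pi_K\Lambda\subsetneq\Lambda'\subsetneq\Lambda$, up to homothety) lies in $\mathfrak{s}_K(\Ha)$ if and only if the line $\Lambda'/\pi_K\Lambda\subseteq\overline{\Lambda}$ is stable under the image $\overline{\Ha}_k$ of $\Ha$ in $\mathrm{End}_k(\overline{\Lambda})\cong\mathbb{M}_2(k)$; this uses only $\Ha\Lambda\subseteq\Lambda$ together with $\pi_K\Lambda\subseteq\Lambda'$. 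Consequently $\Da$ is a leaf of $\mathfrak{s}_K(\Ha)$ precisely when $\overline{\Lambda}$ contains exactly one $\Ha$-invariant $k$-line (consistent with the shape of branches in Prop. \ref{p23}).

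Next I would compare the residue-field pictures over $K$ and over $L$. Since $\pi_K$ lies in $\pi_L^{e}\oink_L\subseteq\pi_L\oink_L$, the composite $\oink_K\to\oink_L\to\ell$ (with $\ell=\oink_L/\pi_L$) kills $\pi_K$ and hence factors through $k$; therefore $\Lambda_L/\pi_L\Lambda_L\cong\ell\otimes_k\overline{\Lambda}$ canonically, and under this identification the $\Ha$-invariant $\ell$-lines are exactly the lines stable under $\ell\otimes_k\overline{\Ha}_k\subseteq\mathbb{M}_2(\ell)$. Applying the leaf criterion above over $L$, the lemma is reduced to the purely algebraic assertion: if a two-dimensional $k$-vector space $V$ carries the action of a $k$-subalgebra $A\subseteq\mathrm{End}_k(V)$ with exactly one $A$-invariant line $V_0$, then $V\otimes_k\ell$ has exactly one $A\otimes_k\ell$-invariant line, namely $V_0\otimes_k\ell$.

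To prove this, note that "exactly one invariant line" means $V$ is indecomposable and not simple as an $A$-module, i.e. the short exact sequence $0\to V_0\to V\to V/V_0\to 0$ does not split. By flat base change for $\mathrm{Ext}^1$ over the finite-dimensional $k$-algebra $A$ (or, concretely, by putting $A$ in upper-triangular form), the class of the extension obtained after applying $\ell\otimes_k(-)$ is the original class tensored with $1\in\ell$, which is nonzero because $x\mapsto x\otimes 1$ is injective; hence the base-changed sequence is still non-split. Now $V_0\otimes_k\ell$ is an $A\otimes_k\ell$-invariant line, and any second invariant line $M$, being one-dimensional and distinct from $V_0\otimes_k\ell$, would be a complement to it and thus split the sequence — a contradiction. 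So $V\otimes_k\ell$ has exactly one invariant line, and $\Da_L$ is a leaf of $\mathfrak{s}_L(\Ha_L)$.

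The only real obstacle is this last step: it is tempting but wrong to argue that "indecomposable module" is preserved under extension of the base field (a separable field extension is indecomposable over itself yet decomposes after base change), so one must exploit that $V$ is two-dimensional — equivalently, that the configurations with a unique invariant line are "split", being conjugate to the Borel subalgebra or to $k[\varepsilon]/(\varepsilon^2)$ acting on $k^2$ — or else run the $\mathrm{Ext}$ computation, which sidesteps the case analysis. Everything else — the lattice dictionary, the isomorphism $\Lambda_L/\pi_L\Lambda_L\cong\ell\otimes_k\overline{\Lambda}$, and the neighbour-versus-invariant-line correspondence — is routine bookkeeping.
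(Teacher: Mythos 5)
The paper does not actually prove this lemma---it is imported verbatim from \cite[Cor. 3.2]{GB1}, which is why the statement carries a \qed---so there is no internal argument to compare yours against; I can only judge the proposal on its own. It is correct and self-contained. The dictionary you set up is the right one: neighbors of $\Da=\mathrm{End}_{\oink_K}(\Lambda)$ inside $\mathfrak{s}_K(\Ha)$ correspond to $\overline{\Ha}$-stable lines in $\Lambda/\pi_K\Lambda$ (using $\Ha\Lambda\subseteq\Lambda$ and $\pi_K\Lambda\subseteq\Lambda'$), and the identification $\Lambda_L/\pi_L\Lambda_L\cong\ell\otimes_k(\Lambda/\pi_K\Lambda)$ handles ramified and unramified $L/K$ uniformly, which matters because in the ramified case the neighbors of $\Da_L$ in $\mathfrak{t}(L)$ are not extensions of neighbors of $\Da$; your count bypasses that entirely. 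You also correctly isolate, and correctly resolve, the one genuine danger point: ``unique stable line'' must not be argued via preservation of indecomposability under base field extension (false in general, e.g.\ for a separable quadratic subfield of $\mathbb{M}_2(k)$, which is exactly the configuration where an isolated vertex of $\mathfrak{s}_K(\Ha)$ can acquire neighbors over $L$---consistent with the lemma only asserting something about leaves). The non-split-extension argument via flat base change of $\mathrm{Ext}^1$ is valid, since any second stable line in a two-dimensional space is automatically a complement; alternatively one can just observe that a unital subalgebra of $\mathbb{M}_2(k)$ with exactly one stable line is conjugate to $k[\varepsilon]$ or to the Borel, both of which visibly keep a unique stable line after any extension of scalars. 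Either route closes the argument; I see no gap.
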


\begin{proposition}\label{p45}
    Every Bass order is a ghost intersection of maximal orders.
\end{proposition}

\begin{proof}
We use the classification in last section 
to give a case
by case proof.  The result is well known for 
Eichler orders, for which the field $L$ 
on which the orders
are defined can be assumed to be $L=K$. Therefore,
we can assume that the Bass order $\Ba$ is 
in either of the cases \ref{Ea1} or 
\ref{Ea2} in Prop. \ref{p41n}.

Firstly, we let $\Ba$ be a Bass order of the form
$\Ba=\mathcal{O}_{\lu}+\pi_K^r\Da$,
where $L/K$ is an unramified quadratic extension,
and $\Da$ is the only maximal order containing
$\Ba$. Then the extension 
$\lu_L=L\lu\cong L\times L$
has a ring of integers 
$\oink_{\lu_L}\cong\oink_L\times\oink_L$,
which is contained precisely in the maximal orders 
in an infinite line $\mathfrak{p}$, passing through $\Da_L$, 
and whose visual limits $z$ and $\bar{z}$
satisfy $\bar{z}=\sigma(z)$,
as elements in $\mathbb{P}^1(L)$,
where $\sigma$ is the 
generator of the Galois group $\mathrm{Gal}(L/K)$.  
Let $\Da_1\subseteq\alge_L$ be a 
maximal order in $\mathfrak{p}$ at distance $r$ from $\Da$. Set 
$\Ba'=\Da_L\cap\Da_1\cap\alge $. 
This is an order in
$\alge $ that contains $\oink_{\lu}$ and 
it is contained
in $\Da$. It follows that 
$\Ba=\mathcal{O}_{\lu}+\pi_K^{r'}\Da$ for 
some integer
$r'$ by Lem. \ref{l43n}. In fact, $r'$ is the smallest integer $s$ satisfying
$\pi_K^s\Da\subseteq\Da_1$, which is equivalent
to $\Da_L^{[s]}\subseteq\Da_1$. Since 
$\mathfrak{s}_L\left(\Da_L^{[s]}\right)$ is a ball of radius $s$
around the vertex $\Da_L$, it follows that $r=r'$, 
and the result follows.

Finally, assume that $\Ba$ is a Bass order of the form 
$\Ba=\mathcal{O}_{\lu}+
\boldsymbol\pi_{\lu}^r\Ea$,
where $\Ea=\Da\cap\Da'$ is an 
Eichler order of level $1$
containing $\Ba$, the subalgebra 
$\lu\subseteq\alge $ is isomorphic
to the ramified quadratic 
extension $L$ of $K$,  and 
$\boldsymbol\pi_{\lu}\in\oink_{\lu}$
is a uniformizer. Let 
$\Ea_L=\oink_L\Ea$ be the extension,
which is an Eichler order, as it 
contains a non-trivial
idempotent. The order $\Ea_L$ is
contained precisely 
in the maximal orders between 
$\Da_L$ and $\Da_L'$ 
in the tree $\mathfrak{t}(L)$, 
as follows from Lem.
\ref{c3x}. In fact, $\Ea_L$ is contained
in exactly $3$ maximal orders in $\alge_L$,
since $\Da_L$ and $\Da_L'$ lie at distance $2$ in 
$\mathfrak{t}(L)$. The third order is
denoted $\hat{\Da}$.
It is located between $\Da_L$ and $\Da_L'$ in 
$\mathfrak{t}(L)$, and it is not defined over
$K$, i.e., is not the extension of a maximal 
order in $\alge$.
Note that we can write
$\boldsymbol\pi_{\lu}=
\pi_L\mathtt{u}$ in $\alge_L$, where 
$\mathtt{u}$ is a unit in 
the algebra $\lu_L=L\lu$,
as in the proof of Prop. \ref{p27}.
Assume first that $L/K$ is separable. Then
the matrix $\boldsymbol\pi_{\lu}$ has two 
eigenvectors in the vector 
space $L^2$, which correspond to visual 
limits $z$ and $\bar{z}$ 
of the tree $\mathfrak{t}(L)$, and the 
generator $\sigma$
of the Galois group $\mathcal{G}=
\mathrm{Gal}(L/K)$ 
satisfies $\sigma(z)=\bar{z}$. Let $\mathfrak{p}$
be the infinite line with visual limits 
$z$ and $\bar{z}$.
Let $\Da_0$ be the point of the  $\mathfrak{p}$
that is closest to $\mathfrak{t}(L/K)$.
By Prop. \ref{p26}, the vertex in 
$\mathfrak{t}(L/K)$ that is closest to $\Da_0$
is $\hat{\Da}$. Note that $\Da_0=\hat{\Da}$ 
precisely when $K$ is non-dyadic.
Consider a maximal order $\Da_1$ 
in the ray from $\hat{\Da}$ to $z$ at distance 
$r$ from $\hat{\Da}$. In the inseparable case
we use the ray joining $\hat{\Da}$ to 
the visual limit $z$ of the branch
$\mathfrak{q}$ of any non-trivial
order in $\lu_L$, as described in Prop.
\ref{p27}, but the reasoning is similar.
Note that $\oink_{\lu}$
is contained in every maximal order in the line $\mathfrak{p}$
(or the branch $\mathfrak{q}$),
and also in $\hat{\Da}$, so it must be contained in
$\Da_1$. We claim that
$\Ba=\Da_1\cap\Ea\cap\alge $. Again,
set $\Ba'=\Da_1\cap\Ea\cap\alge $.
Then $\Ba'$ is an order containing $\oink_{\lu}$
and contained in $\Ea$, so that 
$\Ba'=
\mathcal{O}_{\lu}+
\boldsymbol\pi_{\lu}^{r'}\Ea$,
for some positive integer $r'$, which is 
the smallest value of $s$ satisfying
$\boldsymbol\pi_{\lu}^s\Ea\subseteq\Da_1$.
Since $\Ea$ spans $\Ea_L$ as a $\oink_L$-module,
we can replace $\Ea$ by $\Ea_L$ and require that
$\boldsymbol\pi_{\lu}^s\Ea_L=
\pi_L^s\mathtt{u}^s\Ea_L\subseteq\Da_1$.
Since $\mathtt{u}\in\oink_{\lu}^*
\subseteq\Ea_L^*$, we can ignore it 
and just require
that $\pi_L^s\Ea_L\subseteq\Da_1$, which is 
equivalent to $s\geq r$ by the general theory. The result 
follows by reasoning as in the previous case.
\end{proof}

\begin{proposition}\label{p46}
    Assume $\Ra$ is a ghost intersection of maximal orders, and
    let  $\Ra'=\oink_K\mathbf{1}+\pi_K^r\Ra$. Then $\Ra'$
    is a ghost intersection of maximal orders.
\end{proposition}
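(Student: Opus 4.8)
}
The plan is to write $\Ra'$ \emph{explicitly} as a ghost intersection, obtained from an expression for $\Ra$ by fattening each maximal order into a ball. So fix an expression $\Ra=\alge\cap\bigcap_{i=1}^n\Da_i$ with the $\Da_i$ maximal $\oink_L$‑orders in $\alge_L$, put $e=e(L/K)$, and for each $i$ let $\mathfrak b_i$ be the ball of radius $re$ around $\Da_i$ in $\mathfrak t(L)$. Set $\Ra''=\alge\cap\bigcap_i\bigcap_{\Da\in\mathfrak b_i}\Da$. By construction $\Ra''$ is a ghost intersection of maximal orders, so it remains to prove $\Ra''=\Ra'$.

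First I would record the purely $L$‑side identities. By Example \ref{e24} applied over $L$, $\bigcap_{\Da\in\mathfrak b_i}\Da=\oink_L\mathbf 1+\pi_L^{re}\Da_i$; using the relation $\Ha_1^{[s]}\cap\Ha_2^{[s]}=(\Ha_1\cap\Ha_2)^{[s]}$ over $L$ (the $L$‑analogue of \cite[Prop. 2.1]{Eichler2}) inductively on $n$, one gets $\bigcap_i\big(\oink_L\mathbf 1+\pi_L^{re}\Da_i\big)=\oink_L\mathbf 1+\pi_L^{re}\mathcal S$, where $\mathcal S:=\bigcap_i\Da_i$. Hence $\Ra''=\alge\cap\big(\oink_L\mathbf 1+\pi_L^{re}\mathcal S\big)$. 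Since $v_L(\pi_K)=e$ we have $\pi_L^{re}\mathcal S=\pi_K^r\mathcal S$, and since multiplication by $\pi_K^r$ is a bijection of $\alge$, $\pi_K^r(\alge\cap\mathcal S)=\alge\cap\pi_K^r\mathcal S$; so $\Ra'=\oink_K\mathbf 1+(\alge\cap\pi_K^r\mathcal S)$, and what must be shown is exactly
$$\alge\cap\big(\oink_L\mathbf 1+\pi_K^r\mathcal S\big)=\oink_K\mathbf 1+\big(\alge\cap\pi_K^r\mathcal S\big).$$
The inclusion $\supseteq$ holds because $\oink_K\subseteq\oink_L$ and $\pi_K^r\mathcal S\subseteq\oink_L\mathbf 1+\pi_K^r\mathcal S$. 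For $\subseteq$, take $x\in\alge$ with $x=a\mathbf 1+\pi_K^r s$, $a\in\oink_L$, $s\in\mathcal S$. If one can find $a_0\in\oink_K$ with $a\equiv a_0\pmod{\pi_K^r\oink_L}$, then $x-a_0\mathbf 1=(a-a_0)\mathbf 1+\pi_K^r s$ lies in $\alge$ (as $a_0\in K$) and in $\pi_K^r\oink_L\mathbf 1+\pi_K^r\mathcal S=\pi_K^r\mathcal S$ (as $\oink_L\mathbf 1\subseteq\mathcal S$), whence $x=a_0\mathbf 1+(x-a_0\mathbf 1)$ is of the desired form. So everything reduces to: the class of $a$ in $\oink_L/\pi_K^r\oink_L$ lies in the image of $\oink_K$.

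To get this I would produce a $K$‑linear functional $p\colon\alge\to K$ with $p(\mathbf 1)=1$ and $p_L(\mathcal S)\subseteq\oink_L$ (where $p_L$ denotes the $L$‑linear extension). Granting such $p$, applying $p_L$ to $x=a\mathbf 1+\pi_K^r s$ gives $p(x)=a+\pi_K^r p_L(s)$ with $p(x)\in K$ and $\pi_K^r p_L(s)\in\oink_L$, so $p(x)\in K\cap\oink_L=\oink_K$, and $a-p(x)=-\pi_K^r p_L(s)\in\pi_K^r\oink_L$; thus $a_0:=p(x)$ works. When $K$ is non‑dyadic one simply takes $p=\frac12\tr$: then $p(\mathbf 1)=1$ and $p_L(\mathcal S)=\frac12\tr(\mathcal S)\subseteq\oink_L$ because elements of the order $\mathcal S$ are integral over $\oink_L$. (Concretely this is just the observation $2a=\tr(x)-\pi_K^r\tr(s)\in\oink_K+\pi_K^r\oink_L$.)

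The main obstacle is the existence of $p$ in the \emph{dyadic} case, where $\frac12\mathbf 1$ is unavailable. Here I would seek $A\in\alge$ with $\tr(A)\in\oink_K^{*}$ and $\tr(A\mathcal S)\subseteq\oink_L$, and set $p=\tr(A)^{-1}\tr(A\,\cdot\,)$. Since $\mathcal S\subseteq\Da_1$ and maximal $\oink_L$‑orders are self‑dual for the reduced trace form, $\tr(A\Da_1)\subseteq\oink_L$ already forces $\tr(A\mathcal S)\subseteq\oink_L$, so it suffices to find a reduced‑trace‑unit element in the $\oink_K$‑order $\alge\cap\Da_1$. This is where the structure of orders of the form $\alge\cap\Da_1$, with $\Da_1$ a maximal $\oink_L$‑order, enters: the branch of $\alge\cap\Da_1$ in $\mathfrak t(K)$ is a single vertex or a single edge (this follows from the description of $\mathfrak t(L/K)$ inside $\mathfrak t(L)$ in \cite{GB1}), so $\alge\cap\Da_1$ is a maximal order, an Eichler order of level one, or a Bass order of type \ref{Ea1}; each of these contains an element of unit reduced trace — for the last because the trace $\oink_{L'}\to\oink_K$ of an unramified quadratic extension $L'/K$ is surjective even when $K$ is dyadic. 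With $p$ in hand the identity, and hence the proposition, follows; since the arguments in this paper only require quadratic $L$, one may alternatively settle the dyadic existence of $p$ by listing the finitely many explicit shapes of $\alge\cap\Da_1$ for quadratic $L/K$. \qed
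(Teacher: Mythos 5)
Your overall skeleton is the same as the paper's: replace each $\Da_i$ by the ball of radius $er$ around it, observe that $\Ra''=\alge\cap\bigcap_i\Da_i^{[er]}$ is a ghost intersection by Example \ref{e24}, and prove $\Ra'=\Ra''$. Your reformulation $\Ra''=\alge\cap(\oink_L\mathbf 1+\pi_K^r\mathcal S)$, the easy inclusion, and the reduction to ``the class of $a$ in $\oink_L/\pi_K^r\oink_L$ comes from $\oink_K$'' are all correct, and the non-dyadic functional $p=\tfrac12\tr$ settles that case. Where you diverge from the paper is in how the $\oink_K$-rational scalar part is produced: the paper first notes that, since $\Ra$ lies in some maximal $\oink_K$-order $\Da$, one may harmlessly adjoin $\Da_L$ to the list of $\Da_i$'s; an element of $\Ra''$ then lies in $\Da_L^{[er]}\cap\alge\subseteq\Da^{[r]}$ by Proposition \ref{p21}, which gives the decomposition $z\mathbf 1+\pi_K^r\mathbf u$ with $z\in\oink_K$ outright, and comparing it with the decompositions relative to each $\Da_i$ (the difference of two commuting integral elements being integral) finishes the argument with no dyadic/non-dyadic case split.

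The dyadic half of your argument has a genuine gap. The structural claim that the branch of $\alge\cap\Da_1$ in $\mathfrak t(K)$ is a single vertex or edge is false for general $L/K$, and with it the existence of a unit-reduced-trace element of $\alge\cap\Da_1$: the paper's own Example 6.2 (an unramified cubic extension $E/K$, with $\Da_1$ at distance $1$ from $\mathfrak t(E/K)$) gives $\alge\cap\Da_1=\Da^{[1]}=\oink_K\mathbf 1+\pi_K\Da$, whose branch is a ball of radius $1$ and whose reduced traces all lie in $2\oink_K+\pi_K\oink_K$, hence are never units when $K$ is dyadic. So for the ghost intersection $\Ra=\alge\cap\Da_1$ over $\Q_2$ there is no $A\in\alge\cap\Da_1$ with $\tr(A)\in\oink_K^*$, and your construction of $p$ breaks down; your fallback of ``listing shapes for quadratic $L$'' does not cover this, and the proposition (and its use in Proposition 5.4) is needed for arbitrary finite $L/K$. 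Moreover, even among quadratic shapes there are dyadic Bass orders of type \ref{Ea2}, such as $\oink_{\lu}+\boldsymbol\pi_{\lu}\Ea$ with $\lu$ ramified, all of whose elements have reduced trace in $2\oink_K+\pi_K\oink_K$, so the enumeration strategy would also hit non-existence. The repair is exactly the paper's preliminary normalization: put a $K$-rational $\Da_L$ among the $\Da_i$; then $A=\mathbf x_{11}\in\Da=\alge\cap\Da_L$ has $\tr(A)=1$ and $\tr(A\mathcal S)\subseteq\tr(A\Da_L)\subseteq\oink_L$ by self-duality of the maximal order, and your functional argument then works uniformly in all residue characteristics.
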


\begin{proof}
    Assume $\Ra=\alge \cap\bigcap_{i=1}^n\Da_i$,
    where $\Da_1,\dots,\Da_n$ are maximal 
    $\oink_L$-orders in $\alge_L$ for
    some extension $L$ of $K$. Since 
    $\Ra$ is contained
    in some maximal $\oink_K$-order  in $\alge$,
    it does not hurt to assume that at least one 
    of the $\Da_i$ is defined over $K$, say 
    $\Da_1$ to fix ideas. Let $\Ra'':=
    \alge \cap\bigcap_{i=1}^n\Da_i^{[er]}$,
    where $e=e(L/K)$ is the ramification index.
    We claim that $\Ra'=\Ra''$. 
    Since $\Da_i^{[er]}$ is an intersection of
    maximal orders, as seen in Ex. \ref{e24},
    the result follows. 
    
    Let $\mathbf{h}\in\Ra'$. Write
    $\mathbf{h}=z\mathbf{1}+\pi_K^r\mathbf{u}$
    with $\mathbf{u}\in\Ra$ and $z\in\oink_K$.
    Since $\mathbf{u}\in\Da_i$, we have
    $\pi_K^r\mathbf{u}\in\Da_i^{[er]}$, since the
    normalized valuation $\nu_L$ on the local 
    field $L$ satisfies
    $\nu_L\left(\pi_K^r\right)=er$.
    It follows that
    $\mathbf{h}\in\Ra''$.
    
    Now assume 
    $\mathbf{h}\in\Ra''$. Write
    $\Da_1=\Da_L$, where $\Da$ is an 
    $\oink_K$-order
    in $\alge $. Since 
    $\mathbf{h}\in\Da_1^{[er]}$,
    then $\mathbf{h}\in\Da'_L$ for 
    every maximal $\oink_K$-order 
    $\Da'$ at distance $r$ from $\Da$, 
    according to Prop. 
    \ref{p21}. Since 
    $\mathbf{h}\in\alge $, then 
    $\mathbf{h}\in\Da'$
    for every such maximal $\oink_K$-order $\Da'$,
    i.e., $\mathbf{h}\in\Da^{[r]}$. 
    It follows that 
    $\mathbf{h}=z\mathbf{1}+\pi_K^r\mathbf{u}$,
    where $z\in\oink_K$ and $\mathbf{u}\in\Da$. 
    Since
    $\mathbf{h}\in\Da_i^{[er]}$, then 
    $\mathbf{h}=z'\mathbf{1}+
    \pi_K^r\mathbf{u}'$ with
    $\mathbf{u}'\in \Da_i$, but then 
    $\mathbf{u}-\mathbf{u}'=\left(
    \frac{z'-z}{\pi_K^r}\right)\mathbf{1}$. 
    Note that $\mathbf{u}'$ and $\mathbf{u}$
    commute since their difference is central
    in $\alge_L$. In particular,
    $\mathbf{u}-\mathbf{u}'$ is the difference
     of two commuting integral matrices, and 
     therefore it is integral. We conclude that
     $\frac{z'-z}{\pi_K^r}$ is in $\oink_K$. 
     Hence, $\mathbf{u}\in \Da_i$. As 
    $\mathbf{u}\in\alge $, then 
    $\mathbf{u}\in\Ra$, and
    therefore $\mathbf{h}\in\Ra'$ as  claimed.
\end{proof}

\begin{proposition}\label{p47}
    Assume $\Ra$ is a ghost intersection of 
    maximal orders, and its branch 
    $\mathfrak{s}_K(\Ra)$ contains a ball 
    $\mathfrak{b}\subseteq\mathfrak{t}(K)$ of 
    radius $r$. Then there is
    an order $\tilde{\Ra}$ that is a ghost intersection of maximal
    orders and satisfies $\Ra=\tilde{\Ra}^{[r]}$.
\end{proposition}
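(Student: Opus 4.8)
The plan is to build $\tilde\Ra$ explicitly from a ghost–intersection presentation of $\Ra$ by ``sliding'' each defining maximal order $er$ steps toward the center of the ball, and then to verify $\Ra=\tilde\Ra^{[r]}$ by a short lattice computation.

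First, let $\Da_0\in\mathfrak t(K)$ be the center of $\mathfrak b$. Since $\mathfrak b\subseteq\mathfrak s_K(\Ra)$ we have $\Ra\subseteq\Da_0$, and since $\mathfrak b=\mathfrak s_K\!\big(\Da_0^{[r]}\big)$ by Prop.\ \ref{p21}, intersecting over $\mathfrak b$ and using Ex.\ \ref{e24} yields the key containment $\Ra\subseteq\bigcap_{\Da\in\mathfrak b}\Da=\Da_0^{[r]}=\oink_K\mathbf 1+\pi_K^r\Da_0$. Next I would write $\Ra=\alge\cap\bigcap_{i=1}^n\Da_i$ with the $\Da_i$ maximal $\oink_L$-orders in $\alge_L$ for a finite extension $L/K$, put $e=e(L/K)$, and — using $\Ra\subseteq\Da_0$, so that $(\Da_0)_L$ may be added to the list — arrange that $\Da_1=(\Da_0)_L$. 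For each $i$ let $d_i$ be the distance in $\mathfrak t(L)$ from $(\Da_0)_L$ to $\Da_i$, and let $\Da_i'$ be the vertex at distance $\max(0,d_i-er)$ from $(\Da_0)_L$ on the geodesic joining the two (so $\Da_1'=(\Da_0)_L$ is defined over $K$). Define $\tilde\Ra=\alge\cap\bigcap_{i=1}^n\Da_i'$; this is a ghost intersection of maximal orders by construction, and a full order once it is seen to contain $\Ra$.

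It then remains to show $\tilde\Ra^{[r]}=\Ra$. Because $\Da_1'$ is defined over $K$, the computation in the proof of Prop.\ \ref{p46} applies and gives $\tilde\Ra^{[r]}=\oink_K\mathbf 1+\pi_K^r\tilde\Ra=\alge\cap\bigcap_{i=1}^n(\Da_i')^{[er]}$. Since $\Da_i$ is at distance $\min(d_i,er)\le er$ from $\Da_i'$, it lies in $\mathfrak s_L\!\big((\Da_i')^{[er]}\big)$, i.e.\ $(\Da_i')^{[er]}\subseteq\Da_i$, whence $\tilde\Ra^{[r]}\subseteq\alge\cap\bigcap_i\Da_i=\Ra$. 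For the reverse inclusion, take $\mathbf m\in\Ra$; using $\Ra\subseteq\Da_0^{[r]}$ write $\mathbf m=z\mathbf 1+\pi_K^r m'$ with $z\in\oink_K$ and $m'\in\Da_0\subseteq(\Da_0)_L$. From $\mathbf m,\mathbf 1\in\Da_i$ we get $\pi_K^r m'\in\Da_i$, hence $m'\in\pi_K^{-r}\Da_i=\pi_L^{-er}\Da_i$.

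The remaining point — and, I expect, the only real obstacle — is the inclusion
$$(\Da_0)_L\cap\pi_L^{-s}\Da_i\ \subseteq\ \Da_i'\qquad(s=er),$$
which says precisely that the $m'$ above lies in $\Da_i'$. This should be a one-line matrix computation: choose a basis of $L^2$ in which $(\Da_0)_L=\matrici_2(\oink_L)$ and $\Da_i=\sbmattrix{\oink_L}{\pi_L^{-d_i}\oink_L}{\pi_L^{d_i}\oink_L}{\oink_L}$ (possible since $\Da_i$ is the $d_i$-th vertex of the geodesic from $(\Da_0)_L$), and then compute entrywise that $(\Da_0)_L\cap\pi_L^{-s}\Da_i=\sbmattrix{\oink_L}{\oink_L}{\pi_L^{\max(0,d_i-s)}\oink_L}{\oink_L}$, which is the Eichler order $(\Da_0)_L\cap\Da_i'$ and in particular is contained in $\Da_i'$. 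Granting this, $m'\in\Da_i'$, so $\mathbf m=z\mathbf 1+\pi_K^r m'\in\oink_L\mathbf 1+\pi_L^{er}\Da_i'=(\Da_i')^{[er]}$ for every $i$; as also $\mathbf m\in\alge$, we conclude $\mathbf m\in\tilde\Ra^{[r]}$. Therefore $\Ra=\tilde\Ra^{[r]}$ with $\tilde\Ra$ a ghost intersection of maximal orders, which is exactly the assertion.
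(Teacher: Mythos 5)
Your proposal is correct. It shares with the paper the two load-bearing ingredients: the observation that the center $\Da_0$ of the ball is defined over $K$ (so that the computation of Prop.~\ref{p46} identifies $\oink_K\mathbf 1+\pi_K^r\tilde\Ra$ with $\alge\cap\bigcap_i(\Da_i')^{[er]}$), and the reduction of the whole statement to exhibiting $\Ra$ as $\alge\cap\bigcap_i(\Da_i')^{[er]}$ for suitable maximal $\oink_L$-orders $\Da_i'$. Where you diverge is in how that presentation is produced. The paper adjoins the extensions of three spread-out leaves of $\mathfrak b$ to the defining family, invokes the classification of intersections of maximal orders over $L$ (Ex.~\ref{e24} and \cite[Thm.~8]{Tu}) to conclude that $\bigcap_i\Da_i$ is a tubular neighborhood intersection of width at least $er$, and rewrites it wholesale as $\bigcap_j(\Da'_j)^{[er]}$ --- an equality of $\oink_L$-orders. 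You instead retract each $\Da_i$ individually by $er$ steps toward $(\Da_0)_L$ and prove only the weaker statement that the two intersections agree after cutting down to $\alge$, via the containment $\Ra\subseteq\Da_0^{[r]}$ and the elementary-divisor computation $(\Da_0)_L\cap\pi_L^{-er}\Da_i\subseteq\Da_i'$ (which is correct as stated). This buys self-containedness: you never need Tu's structure theorem over $L$ nor the fact that the $L$-branch of $\bigcap_i\Da_i$ has width at least $er$, and the hypothesis on the ball is used exactly once, in the $K$-world, to write $\mathbf m=z\mathbf 1+\pi_K^r m'$ with $m'\in\Da_0$. The paper's route is shorter on the page because it delegates the rewriting to already-established structure theory, and it yields the slightly stronger intermediate fact that the equality already holds at the level of $\oink_L$-orders. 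Do make sure to state explicitly, as you gesture at, that fullness of $\tilde\Ra$ follows a posteriori from $\Ra\subseteq\tilde\Ra^{[r]}\subseteq\tilde\Ra$, since the ghost-intersection framework and Prop.~\ref{p46} are phrased for full orders.
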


\begin{proof}
    Assume $\Ra=\alge \cap\bigcap_{i=1}^n\Da_i$,
    where $\Da_1,\dots,\Da_n$ are maximal $\oink_L$-orders for
    some extension $L$ of $K$. As before, it does not hurt
    to assume that at least three of these orders,
    say $\Da_1$, $\Da_2$ and $\Da_3$ are spread out
    leaves of the ball $\mathfrak{b}$, in the sense
    defined in Example \ref{e24}. It follows that the 
    branch $\mathfrak{s}_L\left(\bigcap_{i=1}^n\Da_i\right)$
    has width $er$ or larger, so we can write
    $\bigcap_{i=1}^n\Da_i=\bigcap_{j=1}^m(\Da'_j)^{[er]}$
    for some collection $\Da'_1,\dots,\Da'_m$.
    Furthermore, we can assume that one of these orders
    is the center of the ball $\mathfrak{b}$. In particular it is defined over $K$, so the proof of $\Ra=\tilde{\Ra}^{[r]}$,
    where $\tilde{\Ra}=\alge \cap
    \bigcap_{j=1}^m\Da'_j$, can be
    carried over from last proposition.
    The result follows.
\end{proof}

\begin{proof}[Proof of Theorem \ref{t12}]
Let $\Ra$ be an order that is a
ghost intersection 
of maximal orders. If it is Bass,
there is nothing to prove.
If $\Ra$ is not a Bass order, 
then its branch contains 
a ball of positive radius $r$ by 
Theorem \ref{t11} and
Prop. \ref{p22}. 
Assume $r$ is maximal.
Then Prop. \ref{p47} shows that 
$\Ra=\hat{\Ra}^{[r]}$ for some
order $\hat{\Ra}$ that is a ghost intersection 
of maximal orders. The branch of $\Ra$
contains precisely the maximal orders at distance
$r$ from the branch of $\hat{\Ra}$ by 
Prop. \ref{p21}. Since $r$ was chosen maximal, 
the branch of $\hat{\Ra}$ must be the stem
of the branch of $\Ra$, so that is a line
(or vertex), by Prop. \ref{p22}, 
and $\hat{\Ra}$ is a Bass
order. In particular, it is the Gorenstein closure
of $\Ra$.
On the other hand, if the Gorenstein
closure $\tilde{\Ra}$ of $\Ra$ is a Bass order,
then Prop. \ref{p45} shows that $\tilde{\Ra}$
is a ghost intersection of maximal orders, and
so is $\Ra$ by Prop. \ref{p46}. 
The result follows.
\end{proof}

\section{Examples}

\begin{example}\label{e61}
    If $L/K$ is a quadratic extension, and if
    $\Da\subseteq\alge_L$ is a maximal 
    order that is not defined over $K$, then
    it can be proved that $\Da\cap\alge $
    is a Bass order. In fact, if
    $\mathtt{h}\in\Da\cap\alge $,
    then the branch 
    $\mathfrak{s}_K(\mathtt{h})$ is not empty.
    If $L/K$ is unramified, then, since 
    $\mathfrak{s}_L(\mathtt{h})$ is connected,
    it contains the closest vertex $(\Da_1)_L$ in 
    $\mathfrak{t}(L/K)$ to $\Da$, and therefore
    $\Da\cap\alge \subseteq\Da_1$.
    Since $\mathrm{GL}_2(K)$ acts transitively
    on the set $\mathbb{P}^1(L)\smallsetminus
    \mathbb{P}^1(K)$, which corresponds to the
    set of visual limits of $\mathfrak{t}(L)$
    that are not visual limits of
    $\mathfrak{t}(L/K)$, these are precisely 
    the intersections in case \ref{Ea1} in the
    proof of proposition \ref{p41n}, which proves the claim. Assume now that $L/K$ is 
    ramified. Let $\hat{\Da}$ be the vertex of
    $\mathfrak{t}(L/K)$ that is closest to $\Da$.
    This is the barycenter of an edge in $\mathfrak{t}(K)$ with endpoints 
    $\Da_1$ and $\Da_2$. It is proved
    as before that $\mathtt{h}$ is contained
    in either $\Da_1$ or $\Da_2$.
    Assume it is $\Da_1$. We claim that
    $\mathtt{h}$ is also contained
    in $\Da_2$. This gives us 
    $\Da\cap\alge \subseteq\Da_1\cap\Da_2$ and 
    the result follows as before.
    
    Now we prove the claim.
    We can choose a basis on which
    $\Da_1$ is the ring of integral matrices
    and $\Da_2$ is the ring $\Da'$ in 
    (\ref{nba}). Then the order
    $\hat{\Da}$ has the form 
    $\sbmattrix {\oink_L}{\pi_L^{-1}\oink_L}
    {\pi_L\oink_L}{\oink_L}$,
    so that $\mathtt{h}\in\hat{\Da}$
    implies that the lower left coefficient
    of $\mathtt{h}$ is not a unit, and the result follows. 
\end{example}

\begin{example}
    Let $E/K$ be an unramified cubic extension.
    Let $\Da$ be a vertex in $\mathfrak{t}(E)$
    at distance $1$ from the closest vertex
    $\Da_1$ in $\mathfrak{t}(E/K)$. Let
    $\mathtt{h}\in\Da\cap\alge $.
    Again we have $\mathtt{h}\in\Da_1$.
    Since it is also contained in the
    neighbor $\Da$ of $(\Da_1)_E$,
     the image of $\mathtt{h}$ in
    $(\Da_1)_E/\pi_E(\Da_1)_E$ has an eigenvalue
    corresponding to that edge. Since
    a two-by-two matrix cannot gain a
    new eigenspace by passing to a cubic
    extension, unless it is an scalar matrix,
    we conclude that $\Da\cap\alge =
    \Da_1^{[1]}$.
\end{example}

\begin{example}\label{e63}
    Let $F/K$ be a ramified cubic extension.
    Let $\Da$ be a vertex in $\mathfrak{t}(F)$
    at distance $1$ from a vertex $\hat{\Da}$
    in $\mathfrak{t}(F/K)$ that is not a vertex of
    $\mathfrak{t}(K)$, as in Fig.
    \ref{f5nn}.\textbf{I}.  
    Let $\mathtt{h}\in\Ba=
    \Da\cap\alge $.
    It is proved as in Ex. \ref{e61} that
    $\mathtt{h}$ is contained in both
    endpoints $\Da_1$ and $\Da_2$, which are defined over $K$. To simplify notations,
    we write $\Da_1$ and $\Da_2$ for the maximal
    orders in $\alge$, while using
    $(\Da_1)_F$ and $(\Da_2)_F$ for its extensions to $F$.
    Note that $\Ea=\Da_1\cap\Da_2$ is an Eichler
    order, so it contains a nontrivial 
    idempotent. Therefore, the same holds for
    $\Ea_F$. In particular $\Ea_F$ is an Eichler order and therefore, by Cor. \ref{c3x}
    it must be the intersection
    $(\Da_1)_F\cap(\Da_2)_F$. We conclude
    that this order is not contained
    in $\Da$. Furthermore, the branch of
    $\Ba'=\Da\cap(\Da_1)_F\cap(\Da_2)_F\subseteq
    \alge_F$ is a tubular neighborhood
    of width $1$ of a line of length 1
    (c.f. Ex. \ref{e24}). Note that
    $\Ba'\cap\alge =\Ba$.
    In particular, $\mathtt{h}\in\Ba'$.
    \begin{figure}
    \centering
    \unitlength 1mm 
\linethickness{0.4pt}
\ifx\plotpoint\undefined\newsavebox{\plotpoint}\fi 
\begin{picture}(17,18)(0,18)
\put(2,29){\line(1,0){16}}
\put(1,28){$\bullet$}\put(17,28){$\bullet$}
\put(4.5,28.5){${}^{\hat{\Da}}$}
\put(-1,29){${}^{\Da_1}$}
\put(15,29){${}^{\Da_2}$}
\put(7,29){\line(0,-1){1}}
\put(7,27){\line(0,-1){1}}
\put(7,25){\line(0,-1){1}}
\put(7,23){\line(0,-1){1}}
\put(6,28){$*$}\put(6,21){$*$}
\put(3.5,20){${}^{\Da}$}
\put(4.5,28.5){${}^{\hat{\Da}}$}
\put(13,29){\line(0,-1){1}}
\put(13,27){\line(0,-1){1}}
\put(13,25){\line(0,-1){1}}
\put(13,23){\line(0,-1){1}}
\put(12,28){$*$}\put(12,21){$*$}
\put(09,33){$\textnormal{\textbf{I}}$}
\end{picture}
\qquad
    \unitlength 1mm 
\linethickness{0.4pt}
\ifx\plotpoint\undefined\newsavebox{\plotpoint}\fi 
\begin{picture}(17,18)(0,18)
\put(4,29){\line(1,0){12}}
\put(3,28){$\bullet$}\put(15,28){$\bullet$}
\put(4,29){\line(0,1){6}}
\put(4,32){\line(1,0){3}}
\put(16,29){\line(0,1){6}}
\put(16,32){\line(1,0){3}}
\put(10,29){\line(0,-1){1}}
\put(10,27){\line(0,-1){1}}
\put(10,25){\line(0,-1){1}}
\put(10,23){\line(0,-1){1}}
\put(9,28){$*$}\put(9,21){$*$}
\put(6,20){${}^{\Da}$}
\put(09,33){$\textnormal{\textbf{II}}$}
\put(-1,28){$\cdots$}\put(17,28){$\cdots$}
\end{picture}
\qquad
    \unitlength 1mm 
\linethickness{0.4pt}
\ifx\plotpoint\undefined\newsavebox{\plotpoint}\fi 
\begin{picture}(17,18)(0,18)
\put(-2,29){\line(1,0){24}}
\put(9,28){$\bullet$}
\put(1,29){\line(0,1){3}}
\put(4,29){\line(0,1){6}}
\put(4,32){\line(1,0){3}}
\put(16,29){\line(0,1){6}}
\put(16,32){\line(-1,0){3}}
\put(19,29){\line(0,1){3}}
\put(10,29){\line(0,-1){12}}
\put(10,23){\line(1,0){6}}
\put(13,23){\line(0,1){3}}
\put(10,20){\line(1,0){3}}
\put(7,29){\line(0,-1){1}}
\put(7,27){\line(0,-1){1}}
\put(7,25){\line(0,-1){1}}
\put(7,23){\line(0,-1){1}}
\put(6,28){$*$}\put(6,21){$*$}
\put(3.5,20){${}^{\Da}$}
\put(8,33){$\textnormal{\textbf{III}}$}
\end{picture}
    \caption{The configurations in
    Ex. \ref{e63}, Ex. \ref{e66} and
    Ex. \ref{e67}.}
    \label{f5nn}
\end{figure}
    
    Let $L/K$ be a ramified quadratic extension,
    and let $E=FL$, which is a totally ramified
    extension of degree $6$ over $K$. Then the
    branch of $\Ba'_E$ is a tubular neighborhood
    of width $2$ of a line of length 2. This implies that the branch 
    $\mathfrak{s}_E(\mathtt{h})$ contains
    the midpoint $(\widetilde{\Da})_E$ 
    of the line
    from $(\Da_1)_E$ to $(\Da_2)_E$. Note
    that the midpoint is the extension of an order
    $\widetilde{\Da}\subseteq\alge_L$. 
    Since $\mathtt{h}$ is also contained
    in all the neighbors of 
    $(\widetilde{\Da})_E$,
    its image in  
    $(\widetilde{\Da})_E/
    \pi_E(\widetilde{\Da})_E$
    is an scalar matrix. This implies
    that the same hold over $L$, i.e., 
    $\mathtt{h}$ is contained in all the
    neighbors of $\widetilde{\Da}$. In particular, 
    it is contained in any order of the form
    $\Ba''=\mathcal{O}_{\lu}+\boldsymbol\pi_{\lu}
    \Ea$ with $\lu\cong L$. On the other hand,
    if $\mathtt{h}'$ is an element in
    $\Ba''$, its branch 
    $\mathfrak{s}_L(\mathtt{h}')$ contains
    $(\Da_1)_L$, $(\Da_2)_L$ and another order
    $\Da'$ at distance $1$ from the midpoint
    $\widetilde{\Da}$. Then $\mathfrak{s}_F(\mathtt{h}')$ contains
    $(\Da_1)_F$, $(\Da_2)_F$ and $\Da'_F$.
    In particular, again reasoning as in Ex.
    \ref{e24}, it contains a ball that includes
    the vertex $\Da_F$. We conclude that $\Da$ is a vertex in
    $\mathfrak{s}_E(\mathtt{h}')$, whence $\Ba=\Ba''$.
\end{example}

\begin{example}\label{e64}
    Consider the order  $\Ba''=\mathcal{O}_{\lu}+
    \boldsymbol\pi_{\lu}\Ea$ in the preceding example. It seems to 
    depend on the ramified extension
    $L/K$, but the computations in
    that example show that
    this is not the case. Note that,
    when $K$ is not dyadic, this order
    is in the isomorphism class corresponding to the vertex $w$
    in Fig. \ref{nbh1}.
\end{example}

\begin{example}
    Let $\Ba$ be the order generated
    by the image of a faithful representation
    of the dihedral group $D_4$ into
    $\mathbb{M}_2(\mathbb{Q}_2)$. 
    It follows from \cite[Ex. 9.1]{AAS}
    that $\Ba_{\mathbb{Q}_2(\sqrt{-5})}$ is 
    contained in precisely
    the maximal orders in a ball of
    radius $1$ whose
    center is the barycenter of an edge
    in $\mathfrak{t}(\mathbb{Q}_2)$.
    In particular, its branch 
    $\mathfrak{s}_{\mathbb{Q}_2}(\Ba)$
    is an edge.
    we conclude that $\Ba$ is a Bass order
    of the form 
    $\mathcal{O}_{\lu}+\boldsymbol\pi_{\lu}\Ea$, 
    where $\lu$ is isomorphic
    to $\mathbb{Q}_2(\sqrt{-5})$ and
    $\Ea$ is an Eichler order.
\end{example}

\begin{example}\label{e66}
    Let $\mathtt{h}$ be a matrix with
    eigenvalues in $K$, so that its
    branch $\mathfrak{s}_K(\mathtt{h})$
    is a tubular neighborhood of width $s$
    of a maximal path. Fix a ramified quadratic extension $L/K$, and 
    consider the order $\Ba_r=
    \mathcal{O}_{\lu}+
    \boldsymbol\pi_{\lu}^r\Ea$. To find the
    values of $r$ for which $\Ba_r$
    contains a conjugate of $\mathtt{h}$,
    we can just look for the largest
    value of $r$ for which
    $\mathfrak{s}_L(\mathtt{h})$
    contains a vertex at distance $r$
    from the barycenter of the corresponding edge.
    Note that the branch $\mathfrak{s}_L(\mathtt{h})$
    is a tubular neighborhood of width $2s$ of a maximal path.
    Since the edge can be placed
    on the stem, as in Fig. \ref{f5nn}.\textbf{II}, we get
    the inequality $r\leq2s$.
\end{example}

\begin{example}\label{e67}
    Assume now that $\mathtt{h}$ has eigenvalues in an 
    unramified quadratic extension of $K$, so that its
    branch $\mathfrak{s}_K(\mathtt{h})$
    is a tubular neighborhood of width $s$
    of a vertex. The reasoning here is
    analogous to the preceding example,
    except that we no longer have stem
    edges to use, so the optimal placement is the 
    one depicted in Fig. \ref{f5nn}.\textbf{III}.
    We get the inequality $r+1\leq2s$. 
    This also work for matrix families,
    when the intersection of the corresponding
    branches is a ball. This case appear often
    in practice, since the intersection
    of two tubular neighborhoods of lines
    is a ball, whenever the distance $e$ between
    the stems is larger than the difference $d$
    of the widths and satisfies $d\equiv e\mod 2$.
    See \cite[Prop. 2.3]{AS16}.
\end{example}

\begin{example}\label{e69}
    Fix a maximal order $\Da_1$ in $\alge$.
    To compute the number of orders of the form
    $\mathcal{O}_{\lu}+
    \pi_K^r\Da_1$ with branch $\{\Da_1\}$, where
    $\lu$ runs over the set of two
    dimensional subalgebras of $\alge$ that are 
    isomorphic to a fixed unramified quadratic extension $L$ 
    of $K$, we can simply count the appropriate
    paths in the Bruhat-Tits tree 
    $\mathfrak{t}(L)$.
    In figure \ref{f6}.\textbf{I} 
    the orders $\Da'$ and $\Da''$
    correspond to maximal orders in
    $\alge_{L}$ that fail two contain a generator
    of any such  order $\mathcal{O}_{\lu}$
    that is contained in $\Da\cap\alge$,
    since $\mathfrak{s}_L(\mathcal{O}_{\lu})$
    is a line.
    We conclude that these maximal orders
    define Bass orders that are isomorphic
    to $\Ba=\mathcal{O}_{\lu}+
    \pi_K^r\Da_1$, but do not coincide with it.
    In fact, the only other maximal order
    at distance $r$
    defining the same Bass order as $\Da$
    is the other vertex at the same distance
    in the line 
    $\mathfrak{s}_L(\mathcal{O}_{\lu})$.
    This is the Galois image $\sigma(\Da)$,
    where $\sigma$ is a generator of the Galois 
    group of the extension $L/K$. A similar 
    argument work for ramified extensions,
    but the branches that we need to count
    are no longer lines. For instance, 
    an order of the form 
    $\mathcal{O}_{\lu}$, where
    $\lu$ is isomorphic to an unramified 
    extension of a non-dyadic field, is contained
    in the endpoints $\Da_1$ and $\Da_2$ of the 
    edge in $\mathfrak{t}(K)$ whose barycenter
    $\hat{\Da}$ is in the stem of the branch
    $\mathfrak{s}_L(\mathcal{O}_{\lu})$.
    This implies that the latter branch is a tubular neighborhood of width $1$ of its
    stem. In particular, the vertices
    $\Da$, $\Da'$ and $\Da''$ in Fig.
    \ref{f6}.\textbf{II} define the same
    Bass order. We need, therefore to count
    the stems of the branches, which, for
    the order $\Da$, would be the line between
    $\widetilde{\Da}$ and its Galois image
    $\sigma(\widetilde{\Da})$.
\begin{figure}
    \centering
    \unitlength 1mm 
\linethickness{0.4pt}
\ifx\plotpoint\undefined\newsavebox{\plotpoint}\fi 
\begin{picture}(30,24)(30,14)
\put(46,14){\textbf{I}}
\put(48,20){\line(0,1){8}}
\put(47,23){$\bullet$}\put(49,24){${}^{\Da_1}$}
\put(60,24){\line(-1,0){1}}
\put(58,24){\line(-1,0){1}}
\put(56,24){\line(-1,0){1}}
\put(54,24){\line(-1,0){1}}
\put(52,24){\line(-1,0){1}}
\put(50,24){\line(-1,0){1}}
\put(48,24){\line(-1,0){1}}
\put(46,24){\line(-1,0){1}}
\put(44,24){\line(-1,0){1}}
\put(42,24){\line(-1,0){1}}
\put(40,24){\line(-1,0){1}}
\put(38,24){\line(-1,0){1}}
\put(36,24){\line(-1,0){1}}
\put(34,23.25){$*$}\put(32,23){${}^{\Da}$}
\put(60,23.25){$*$}\put(62,23){${}^{\sigma(\Da)}$}
\put(42,20){\line(0,1){1}}
\put(42,22){\line(0,1){1}}
\put(42,24){\line(0,1){1}}
\put(42,26){\line(0,1){1}}
\put(54,20){\line(0,1){1}}
\put(54,22){\line(0,1){1}}
\put(54,24){\line(0,1){1}}
\put(54,26){\line(0,1){1}}
\put(57,20){\line(0,1){1}}
\put(57,22){\line(0,1){1}}
\put(57,24){\line(0,1){1}}
\put(57,26){\line(0,1){1}}
\put(38,28){\line(0,-1){1}}
\put(38,26){\line(0,-1){1}}
\put(38,24){\line(0,-1){1}}
\put(38,22){\line(0,-1){1}}
\put(37,19){$*$}\put(34,16){${}^{\Da'}$}
\put(37,27){$*$}\put(34,27){${}^{\Da''}$}
\end{picture}
\qquad\qquad
\linethickness{0.4pt}
\ifx\plotpoint\undefined\newsavebox{\plotpoint}\fi 
\begin{picture}(30,24)(30,14)
\put(46,14){\textbf{II}}
\put(48,20){\line(0,1){8}}
\put(47,19){$\bullet$}\put(49,19){${}^{\Da_1}$}
\put(47,27){$\bullet$}\put(49,27){${}^{\Da_2}$}
\put(47,23){$*$}\put(49,23){${}^{\hat{\Da}}$}
\put(60,24){\line(-1,0){1}}
\put(58,24){\line(-1,0){1}}
\put(56,24){\line(-1,0){1}}
\put(54,24){\line(-1,0){1}}
\put(52,24){\line(-1,0){1}}
\put(50,24){\line(-1,0){1}}
\put(48,24){\line(-1,0){1}}
\put(46,24){\line(-1,0){1}}
\put(44,24){\line(-1,0){1}}
\put(42,24){\line(-1,0){1}}
\put(40,24){\line(-1,0){1}}
\put(38,24){\line(-1,0){1}}
\put(36,24){\line(-1,0){1}}
\put(34,23.25){$*$}\put(32,23){${}^{\Da}$}
\put(37,23.25){$*$}\put(38.5,23.5){${}^{\widetilde{\Da}}$}
\put(56,23.25){$*$}\put(57.25,23.75){${}^{\sigma(\widetilde{\Da})}$}
\put(42,20){\line(0,1){1}}
\put(42,22){\line(0,1){1}}
\put(42,24){\line(0,1){1}}
\put(42,26){\line(0,1){1}}
\put(54,20){\line(0,1){1}}
\put(54,22){\line(0,1){1}}
\put(54,24){\line(0,1){1}}
\put(54,26){\line(0,1){1}}
\put(57,20){\line(0,1){1}}
\put(57,22){\line(0,1){1}}
\put(57,24){\line(0,1){1}}
\put(57,26){\line(0,1){1}}
\put(38,28){\line(0,-1){1}}
\put(38,26){\line(0,-1){1}}
\put(38,24){\line(0,-1){1}}
\put(38,22){\line(0,-1){1}}
\put(37,19){$*$}\put(34,16){${}^{\Da'}$}
\put(37,27){$*$}\put(34,27){${}^{\Da''}$}
\end{picture}
\qquad\qquad
\unitlength 1mm 
\linethickness{0.4pt}
\ifx\plotpoint\undefined\newsavebox{\plotpoint}\fi 
\begin{picture}(30,18)(30,16)
\put(60,20){\line(0,1){8}}
\put(46,16){\textbf{III}}
\put(59,19){$\bullet$}\put(61,19){${}^{\Da_1}$}
\put(59,27){$\bullet$}\put(61,27){${}^{\Da_2}$}
\put(59,23){$*$}\put(61,23){${}^{\hat{\Da}}$}
\put(60,24){\line(-1,0){1}}
\put(58,24){\line(-1,0){1}}
\put(56,24){\line(-1,0){1}}
\put(54,24){\line(-1,0){1}}
\put(52,24){\line(-1,0){1}}
\put(50,24){\line(-1,0){1}}
\put(48,24){\line(-1,0){1}}
\put(46,24){\line(-1,0){1}}
\put(44,24){\line(-1,0){1}}
\put(42,24){\line(-1,0){1}}
\put(40,24){\line(-1,0){1}}
\put(38,24){\line(-1,0){1}}
\put(36,24){\line(-1,0){1}}
\put(41,23.25){$*$}
\put(41,19)
{${}^{\sigma\left(\widetilde{\Da}\right)}$}
\put(33,23.25){$*$}\put(29,24){${}^{\sigma(\Da)}$}
\put(48,24){\line(0,1){1}}
\put(48,26){\line(0,1){1}}
\put(48,28){\line(0,1){1}}
\put(48,30){\line(0,1){1}}
\put(48,32){\line(0,1){1}}
\put(48,34){\line(0,1){1}}
\put(48,36){\line(0,1){1}}
\put(48,38){\line(0,1){1}}
\put(47,29.25){$*$}\put(45,29){${}^{\widetilde{\Da}}$}
\put(47,39.25){$*$}\put(49,39){${}^{\Da}$}
\put(42,24){\line(0,1){1}}
\put(42,26){\line(0,1){1}}
\put(42,28){\line(0,1){1}}
\put(42,30){\line(0,1){1}}
\put(42,28){\line(-1,0){1}}
\put(40,28){\line(-1,0){1}}
\put(38,24){\line(0,-1){1}}
\put(38,22){\line(0,-1){1}}
\put(48,30){\line(1,0){1}}
\put(50,30){\line(1,0){1}}
\put(52,30){\line(1,0){1}}
\put(54,30){\line(1,0){1}}
\put(48,36){\line(-1,0){1}}
\put(46,36){\line(-1,0){1}}
\put(52,30){\line(0,1){1}}
\put(52,32){\line(0,1){1}}
\end{picture}
\caption{The configuration of orders in
Ex. \ref{e69}.}
    \label{f6}
\end{figure}    
The same holds in the dyadic case except
that, here, the stem of 
$\mathcal{O}_{\lu}$ no longer
contains the barycenter $\hat{\Da}$.
For instance, in Fig. \ref{f6}.\textbf{III},
the stem is the line of length $2$ between
$\widetilde{\Da}$ and its Galois image. In the 
picture, assuming a residual field with two 
elements, this line is unique. However, this is no longer the case if $\Da$ is at distance $5$
or more from $\hat{\Da}$. In principle this
method can be applied to study orders of the 
form $\Ba=\oink_{\lu}+\boldsymbol\pi_{\lu}^r\Ea$,
for different ramified quadratic
extensions $L/K$, by passing
to a sufficiently large field.
\end{example}

\section{Acknowledgments}

The author would like to thanks Dr. Claudio Bravo for several useful suggestions.

  Luis Arenas-Carmona\\
Universidad de Chile,\\ Facultad de Ciencias, \\Casilla 653, Santiago,\\
   Chile \\
   \email{learenas@u.uchile.cl}
\end{document}